\documentclass[12pt,a4paper]{amsart}
\usepackage{txfonts}

\usepackage{hyperref}
\usepackage{latexsym}
\usepackage{amssymb}

\newtheorem{theorem}{Theorem}[section]
\newtheorem{lemma}[theorem]{Lemma}
\newtheorem{corollary}[theorem]{Corollary}
\newtheorem{proposition}[theorem]{Proposition}
\newtheorem{example}[theorem]{Example}
\newtheorem{problem}[theorem]{Problem}

\theoremstyle{definition}
\newtheorem{definition}[theorem]{Definition}

\newtheorem{remark}[theorem]{Remark}

\newcommand{\eps}{\varepsilon}

\numberwithin{equation}{section}

\begin{document}

\title[Universal left-stability of $\varepsilon$-isometries]{{\bf On universal left-stability of $\epsilon$-isometries}}

\author{Lingxin Bao, $^\dag$  Lixin Cheng , $^\ddag$ Qingjin Cheng, Duanxu Dai}
\address{Lingxin Bao, Lixin Cheng, Qingjin Cheng, Duanxu Dai:  School of Mathematical Sciences, Xiamen University,
 Xiamen, 361005, China;}
\email{lxcheng@xmu.edu.cn; dduanxu@163.com; }
\thanks{$^\dag$ Support partially by NSFC, grant no.11071201 }
\thanks{$^\ddag$ Support partially by NSFC, grant no. }

\begin{abstract}
Let $X$, $Y$ be two real Banach spaces, and $\eps\geq0$. A map
$f:X\rightarrow Y$ is said to be a standard $\eps$-isometry if
$|\|f(x)-f(y)\|-\|x-y\||\leq\eps$ for all $x,y\in X$ and with
$f(0)=0$. We say that a pair of Banach spaces $(X,Y)$ is stable if
there exists $\gamma>0$ such that for every such $\eps$ and every
standard $\eps$-isometry $f:X\rightarrow Y$ there is a bounded
linear operator $T:L(f)\equiv\overline{{\rm span}}f(X)\rightarrow X$
such that $\|Tf(x)-x\|\leq\gamma\eps$ for all $x\in X$. $X (Y)$ is
said to be left (right)-universally stable, if $(X,Y)$ is always
stable for every $Y (X)$. In this paper, we show that if a dual
Banach space $X$ is universally-left-stable, then it is isometric to
a complemented $w^*$-closed subspace of $\ell_\infty(\Gamma)$ for
some set $\Gamma$, hence, an injective space; and that a Banach
space is universally-left-stable if and only if it is a cardinality
injective space; and universally-left-stability spaces are invariant.

\end{abstract}

\keywords{ $\eps$-isometry,  linear isometry, stability, injective space, Banach space}

\subjclass{Primary 46B04, 46B20, 47A58; Secondary 26E25, 46A20, 46A24}

\maketitle

\section{Introduction}
In this paper, we study properties of universally-left-stable Banach spaces for $\eps$-isometries, and search for
characterizations of such stable spaces.
 We first recall definitions of isometry and $\eps$-isometry.
\begin{definition} Let $X,Y$ be two Banach spaces, $\eps\geq0$, and $f:X\rightarrow Y$ be a mapping.

(1) $f$ is said to be an $\eps$-isometry if  $$|\|f(x)-f(y)\|-\|x-y\||\leq\eps\;\;{\rm for\; all}\; x,y\in X;$$

(2) In particular, if $\eps=0$, then the $0$-isometry $f$ is simply called an isometry;

(3) We say that an ($\eps$-) isometry $f$ is standard if $f(0)=0$.

\end{definition}

{\bf Isometry and linear isometry.} The study of properties of isometry between Banach spaces and its generalizations
 have continued for 80 years. The first celebrated result was due to Mazur and Ulam (\cite{ma},1932): Every surjective
 isometry between two Banach spaces is necessarily affine. But the simple example: $f:\mathbb R\rightarrow\ell^2_\infty$
 defined for $t$ by $f(t)=(t,\sin t)$ shows that it is not true if an isometry is not surjective. For nonsurjective isometry,
  Figiel \cite{figiel} showed the following remarkable theorem in 1968.

\begin{theorem} [Figiel]
Suppose $f$ is a standard isometry from a Banach $X$ to another Bnanach space $Y$. Then there is a linear operator
$T: L(f)\rightarrow X$ with $\|T\|\leq1$ such that $Tf(x)=x,$ for all $x\in X$; or equivalently, $Tf=I_X$, the identity on $X$.
\end{theorem}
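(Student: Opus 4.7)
The plan is to reduce Figiel's theorem to a dual-lifting lemma: for every $x^*\in B_{X^*}$ there exists $y^*\in B_{Y^*}$ with $y^*\circ f = x^*$. Granting this, I define $T$ on $D=\mathrm{span}\,f(X)$ by $T\bigl(\sum_i \lambda_i f(x_i)\bigr)=\sum_i \lambda_i x_i$. For any $x^*\in B_{X^*}$ and the corresponding $y^*$,
\[
\Bigl|x^*\Bigl(\sum_i \lambda_i x_i\Bigr)\Bigr| = \Bigl|y^*\Bigl(\sum_i \lambda_i f(x_i)\Bigr)\Bigr| \leq \Bigl\|\sum_i \lambda_i f(x_i)\Bigr\|,
\]
and the supremum over $x^*$ yields $\|\sum_i \lambda_i x_i\| \leq \|\sum_i \lambda_i f(x_i)\|$. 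This simultaneously establishes well-definedness of $T$ (consider combinations that vanish in $Y$) and the bound $\|T\|\leq 1$ on $D$; continuous extension to $L(f)=\overline D$ preserves the bound, and $Tf=I_X$ holds by construction.

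To prove the lemma, fix $x^*\in B_{X^*}$ and introduce the two $1$-Lipschitz functions $p_\pm\colon Y\to\mathbb{R}$ defined by $p_\pm(y)=\inf_{x\in X}\bigl(\pm x^*(x)+\|y-f(x)\|\bigr)$. Choosing $x=z$ gives $p_\pm(f(z))\leq \pm x^*(z)$; conversely, $\|z-x\|\geq |x^*(z-x)|$ for every $x$ forces $\pm x^*(x)+\|z-x\|\geq \pm x^*(z)$, so $p_\pm(f(z))=\pm x^*(z)$ and $p_\pm(0)=0$. Combining the triangle inequality $\|y-f(x_1)\|+\|y-f(x_2)\|\geq\|f(x_1)-f(x_2)\|$ with the isometry identity $\|f(x_1)-f(x_2)\|=\|x_1-x_2\|$ and with $x^*(x_1-x_2)\geq -\|x_1-x_2\|$, one obtains the key compatibility
\[
\bigl(x^*(x_1)+\|y-f(x_1)\|\bigr)+\bigl(-x^*(x_2)+\|y-f(x_2)\|\bigr)\geq -\|x_1-x_2\|+\|x_1-x_2\|=0
\]
for all $x_1,x_2\in X$, so that $p_+(y)+p_-(y)\geq 0$ on $Y$, equivalently $-p_-(y)\leq p_+(y)$. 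A Hahn-Banach sandwich argument then supplies a continuous linear $y^*\in Y^*$ with $-p_-\leq y^*\leq p_+$; at $y=f(z)$ the two bounds squeeze $y^*(f(z))=x^*(z)$, while $p_+(y)\leq\|y\|$ (obtained by choosing $x=0$) gives $\|y^*\|\leq 1$.

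The principal technical hurdle is the sandwich step, since a priori $p_\pm$ are infima (rather than suprema) of convex functions and so need not themselves be convex; the classical convex-concave sandwich does not apply verbatim. The rigidity one must exploit is that, in the isometric setting, $p_\pm$ coincide with the support functions of the $w^*$-compact convex sets $K_+=\{y^*\in B_{Y^*}:y^*\circ f\leq x^*\}$ and $K_-=\{y^*\in B_{Y^*}:y^*\circ f\geq x^*\}$, so that Hahn-Banach can be applied to these sublinear representatives; alternatively, one reduces to finite subsets $F\subset X$ via weak-$*$ compactness and the finite intersection property, turning the existence of $y^*$ into finite-dimensional Hahn-Banach extension problems. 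Once $y^*$ is produced, the construction of $T$ above completes the proof.
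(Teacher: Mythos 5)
Your reduction of Figiel's theorem to the dual-lifting lemma, and the construction of $T$ from it, are correct: given $y^*\in B_{Y^*}$ with $y^*\circ f=x^*$ for each $x^*\in B_{X^*}$, the inequality $|x^*(\sum_i\lambda_i x_i)|\leq\|\sum_i\lambda_i f(x_i)\|$ does simultaneously yield well-definedness of $T$ on $\mathrm{span}\,f(X)$ and $\|T\|\leq1$. Note that this lemma is exactly the $\varepsilon=0$ case of Theorem 1.7 (Cheng--Dong--Zhang) quoted in the paper, which gives $\phi\in Y^*$ with $\|\phi\|=\|x^*\|$ and $\phi\circ f=x^*$; the paper itself states Figiel's theorem without proof, citing \cite{figiel}, so there is no internal proof to compare against, and your first half is a clean (and known) derivation of Figiel from that lemma.

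The genuine gap is in your proof of the lemma, precisely at the sandwich step that you yourself flag. Your functions $p_\pm$ are infima over the \emph{nonconvex} set $f(X)$ of convex functions, so there is no a priori convexity, and the pointwise inequality $-p_-\leq p_+$ between two merely Lipschitz functions does not by itself produce a linear $y^*$ in between. Neither proposed repair closes this. The first asserts that $p_\pm$ are the support functions of $K_\pm$; only the easy inequality $\sigma_{K_+}\leq p_+$ follows from the definitions, while the reverse identity $p_+=\sigma_{K_+}$ forces $p_+$ to be convex and $w^*$-lower semicontinuous --- which is the entire analytic content of the lemma, asserted rather than proved (it is exactly the ``hidden convexity'' of isometries that Figiel's and Cheng--Dong--Zhang's arguments are designed to extract). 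The second repair is circular: the reduction via $w^*$-compactness and the finite intersection property to finite sets $F=\{x_1,\dots,x_n\}$ is legitimate, but the resulting ``finite-dimensional Hahn--Banach extension problem'' is to extend $f(x_i)\mapsto x^*(x_i)$ with norm at most $1$, which requires precisely $|\sum_i\lambda_i x^*(x_i)|\leq\|\sum_i\lambda_i f(x_i)\|$; taking the supremum over $x^*\in B_{X^*}$, this is the inequality $\|\sum_i\lambda_i x_i\|\leq\|\sum_i\lambda_i f(x_i)\|$, which is equivalent to Figiel's theorem itself (apply $T$ for one direction, define $T$ by it for the other). So the finite case is not a routine Hahn--Banach application but the full difficulty in disguise. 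As written, your argument is a correct reduction of Figiel's theorem to Theorem 1.7 with $\varepsilon=0$, but not a self-contained proof; to complete it you must actually prove the lifting lemma, e.g.\ by the separation argument of \cite{cheng} or by Figiel's original construction.
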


In 2003, Godefroy and Kalton \cite{gode} studied the relationship between isometry and linear isometry, and showed the following deep theorem:
\begin{theorem}[Godefroy-Kalton]
Suppose that $X,Y$ are two Banach spaces.

(1) If $X$ is separable and there is an isometry $f: X\rightarrow Y$, then $Y$ contains an isometric linear copy
of $X$;

(2) If $X$ is a nonseparable weakly compactly generated space, then there exist a Banach space $Y$ and an isometry
$f:X\rightarrow Y$, but $X$ is not linearly isomorphic any subspace of $Y$.

\end{theorem}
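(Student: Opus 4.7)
The plan is to treat the two parts with different techniques: part (1) via the Lipschitz-free space machinery, and part (2) via an explicit counterexample in the nonseparable weakly compactly generated setting.

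For part (1), I would invoke the Arens--Eells (Lipschitz-free) space $\mathcal{F}(X)$, together with the canonical nonlinear isometric embedding $\delta_X:X\to\mathcal{F}(X)$ and the contractive linear barycenter $\beta_X:\mathcal{F}(X)\to X$ satisfying $\beta_X\circ\delta_X=I_X$. Since a standard isometry $f:X\to Y$ is $1$-Lipschitz and fixes the origin, by functoriality of the free-space construction it induces a linear isometric embedding $\widehat f:\mathcal{F}(X)\to\mathcal{F}(Y)$ with $\widehat f\circ\delta_X=\delta_Y\circ f$. The crux is then to establish, for separable $X$, the existence of a linear isometric section $L_X:X\to\mathcal{F}(X)$ of $\beta_X$; equivalently, to exhibit $X$ as a $1$-complemented subspace of $\mathcal{F}(X)$. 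Given $L_X$, the linear isometric embedding $\widehat f\circ L_X:X\to\mathcal{F}(Y)$ is immediate, and what remains is to push this copy of $X$ down into $Y$ itself.

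The construction of $L_X$ is the central technical obstacle and is where separability enters essentially. The plan is to take a dense sequence $\{x_n\}\subset X$, build finite-dimensional contractive linear sections on each $\mathrm{span}\{x_1,\dots,x_n\}$ (using that finite-dimensional spaces are well-behaved complemented pieces of Lipschitz duals), and pass to a pointwise weak$^*$ cluster point in $\mathcal{F}(X)^{**}$ via Banach--Alaoglu. Separability delivers the diagonal extraction, and a perturbation argument upgrades the contractive cluster point to a genuine isometric section of $\beta_X$. For the transfer step, one invokes Figiel's theorem to obtain the contractive linear retraction $T:L(f)\to X$ with $Tf=I_X$: the composition $g:=\beta_Y\circ\widehat f\circ L_X:X\to L(f)$ is automatically linear and $1$-Lipschitz, and a comparison of $L_X$ against $\delta_X$ on $\ker\beta_X$ together with the identity $T\circ\beta_Y\circ\widehat f\circ\delta_X=T\circ f=I_X$ forces $T\circ g=I_X$, so that $g$ must in fact be an isometric embedding of $X$ into $Y$.

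For part (2), the natural candidates are nonseparable weakly compactly generated spaces such as $c_0(\Gamma)$ or $\ell_p(\Gamma)$ for uncountable $\Gamma$. The plan is to construct a ``twisted'' target $Y$ which contains a nonlinear isometric image of $X$ while keeping the density character, weak-topological structure, or some linear cardinal invariant of $Y$ strictly below that of $X$, so that no linear isomorphic embedding $X\hookrightarrow Y$ can exist. The decisive obstacle is the simultaneous bookkeeping: $Y$ must be small enough to defeat every linear embedding yet rich enough to accommodate the nonlinear isometry $f$. A typical implementation proceeds by transfinite induction gluing together countable $X$-fragments along a long chain, exploiting the fact that an isometric image is not constrained to span linearly in the way that a linearly isomorphic copy must, so that in the nonseparable WCG regime the separable part~(1) breaks down in a controlled way.
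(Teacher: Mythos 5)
You should first note that the paper does not prove this statement at all: it is quoted verbatim from Godefroy--Kalton \cite{gode} as background, so the only possible comparison is with their original argument. For part (1) your skeleton is indeed theirs: the free space $\mathcal{F}(X)$, the barycenter $\beta_X$, Figiel's retraction $T$, and an isometric linear section $L_X$ of $\beta_X$, with the isometry of $g=\beta_Y\circ\widehat f\circ L_X$ forced by $Tg=I_X$. (Your ``comparison of $L_X$ against $\delta_X$ on $\ker\beta_X$'' is the wrong mechanism; the correct one is that $T\beta_Y\widehat f$ and $\beta_X$ agree on $\delta_X(X)$, whose linear span is dense in $\mathcal{F}(X)$, hence $Tg=\beta_X L_X=I_X$, and then $\|g\|\le 1$, $\|T\|\le 1$ give isometry --- note this never uses that $\widehat f$ is isometric, only contractive.) The genuine gap is your proof of the central lemma. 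A pointwise weak$^*$ cluster of contractive sections yields only a linear contraction $u:X\to\mathcal{F}(X)^{**}$ with $\beta_X^{**}u=\iota_X$; such a \emph{bidual-valued} section exists cheaply for every Banach space (e.g., weak$^*$ limits of $r^{-1}\delta_X(rx)$, linearized by an invariant mean on the abelian group $(X,+)$). The entire content of Godefroy--Kalton's lifting theorem is the descent from $\mathcal{F}(X)^{**}$ to $\mathcal{F}(X)$, for which no projection is available, and your ``perturbation upgrade'' attacks the wrong target: any contractive linear section is \emph{automatically} isometric, since $\|x\|=\|\beta_X u x\|\le\|ux\|\le\|x\|$, so isometry is free while membership in $\mathcal{F}(X)$ is the hard part. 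Moreover your finite-dimensional input --- contractive sections over each $\mathrm{span}\{x_1,\dots,x_n\}$ --- is asserted, not proved; it is already a nontrivial special case of the very theorem being established.

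Part (2) has two concrete defects. First, a quantifier error: the statement asserts the conclusion for \emph{every} nonseparable WCG space $X$, whereas you set out to exhibit particular candidates such as $c_0(\Gamma)$ or $\ell_p(\Gamma)$; that would prove only an existential variant, not the theorem. Second, your proposed obstruction is impossible as stated: you suggest keeping the density character of $Y$ ``strictly below that of $X$,'' but density character is a purely metric invariant of subsets, and $f(X)\subset Y$ is an isometric copy of $X$, so necessarily $\mathrm{dens}(Y)\ge\mathrm{dens}(f(X))=\mathrm{dens}(X)$; no ``twisted'' target can evade this. Beyond these, the transfinite gluing is a declaration of intent --- you name the ``decisive obstacle'' of simultaneous bookkeeping without resolving it. In Godefroy--Kalton the nonseparable counterexample comes out of the same lifting machinery, namely the failure of the isometric lifting property beyond the separable setting, i.e., a linear-structural obstruction rather than a cardinality one; nothing in your sketch supplies an invariant of that kind, so part (2) remains unproved.
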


Recently, Cheng, Dai, Dong and Zhou \cite{cheng1} showed that for Banach spaces $X$ and $Y$, if such an $\eps$-isometry
$f: X\rightarrow Y$ exists, then there is a linear isometry $U:X^{**}\rightarrow Y^{**}$; in particular, if $Y$ is reflexive,
then there is a linear isometry $U:X\rightarrow Y$.

{\bf $\eps$-isometry and stability.} In 1945,  Hyers and Ulam  proposed the following question\cite{hy} (see, also \cite{om}): whether
for every surjective $\eps$-isometry $f:X\rightarrow Y$ with
$f(0)=0$ there exist a surjective linear isometry $U:X\rightarrow Y$
and $\gamma>0$ such that
$$(1.1)\;\;\;\;\;\;\;\;\;\;\;\;\;\;\;\;\;\;\;\;\;\|f(x)-Ux\|\leq\gamma\varepsilon,~~~~{\rm for~~all}\;
 x\in X .\;\;\;\;\;\;\;\;\;\;\;\;\;\;\;\;\;\;\;\;\;\;\;\;\;\;\;\;\;\;\;\;\;\;$$

 After many years efforts of a number of mathematicians (see, for instance, \cite{ge}, \cite{gru}, \cite{hy}, and  \cite{om}),
 the following sharp estimate was finally obtained by Omladi\v{c} and \v{S}emrl \cite{om}.
\begin{theorem} [Omladi\v{c}-\v{S}emrl]
If $f:X\rightarrow Y$ is a surjective $\varepsilon$-isometry  with $f(0)=0$, then there is a surjective linear
isometry $U:X\rightarrow Y$ such that
$$ \|f(x)-Ux\|\leq2\varepsilon,~~~~{\rm for~~all}\;~~ x\in X. $$
\end{theorem}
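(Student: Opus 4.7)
\smallskip
\noindent\textbf{Proof proposal.}
The plan is to construct a surjective linear isometry $U:X\to Y$ as a limit of rescaled copies of $f$, and then to exploit the full force of surjectivity to sharpen the distance estimate to the optimal value $2\varepsilon$.

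First, I would record the basic norm control: taking $y=0$ in the defining inequality and using $f(0)=0$ gives $|\|f(x)\|-\|x\||\le\varepsilon$ for every $x\in X$. Iterating the $\varepsilon$-isometry inequality along the dyadic chain $x,2x,4x,\ldots$ and telescoping shows that $\{2^{-n}f(2^n x)\}$ is Cauchy in $Y$ with rate $O(2^{-n}\varepsilon)$; define $U(x)=\lim_{n}2^{-n}f(2^n x)$. By construction $U$ is positively homogeneous, extends to $\mathbb{R}$-homogeneity via the analogous argument applied to $-x$, and inherits $\|Ux-Uy\|=\|x-y\|$ by passing the $\varepsilon$-isometry inequality itself through the rescaling (the additive error $\varepsilon$ is killed by the factor $2^{-n}$).

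To upgrade $U$ to a \emph{surjective} linear isometry, I would use the surjectivity of $f$: given $y\in Y$, choose $x_n\in X$ with $f(x_n)=2^n y$; from $|\|x_n\|-2^n\|y\||\le\varepsilon$ the sequence $2^{-n}x_n$ is bounded and a cluster point maps under $U$ to $y$. With $U$ now a surjective isometry between Banach spaces sending $0$ to $0$, the Mazur-Ulam theorem forces $U$ to be affine, hence linear. This yields all the structural properties of $U$; what remains is the distance estimate between $f$ and $U$.

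The hard part, and the main obstacle, is to sharpen the distance bound from the crude $O(\varepsilon)$ estimate that falls out of the construction to the optimal constant $2\varepsilon$. Directly plugging $n=0$ into the Cauchy telescoping yields a much larger multiple of $\varepsilon$. The sharp bound requires a delicate extremal argument: suppose, for contradiction, that $\|f(x_0)-Ux_0\|>2\varepsilon$ for some $x_0$, pick via Hahn-Banach a norming functional for $f(x_0)-Ux_0$, and use surjectivity of $f$ to produce a point $y\in X$ whose image $f(y)$ lies far out along the direction witnessing the extremal deviation. The four $\varepsilon$-isometry inequalities relating $f(x_0), f(y), \|x_0-y\|$, combined with the already-established isometric behaviour of $U$ at large scale, must then be pushed to produce a numerical contradiction; squeezing exactly the constant $2$ out of this geometric juggling is the crux of Omladi\v{c}-\v{S}emrl. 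That $2$ cannot be improved follows from a simple one-dimensional sawtooth-type perturbation of the identity, so any approach must be tight precisely at this step.
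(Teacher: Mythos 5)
This statement is quoted in the paper from Omladi\v{c} and \v{S}emrl \cite{om} without proof, so there is no in-paper argument to compare against; judged on its own terms, your proposal has two genuine gaps. The first is in the construction of $U$: the claim that iterating the $\varepsilon$-isometry inequality along the dyadic chain $x,2x,4x,\ldots$ and telescoping makes $\{2^{-n}f(2^nx)\}$ Cauchy is false as stated. The inequality $\bigl|\,\|f(2^{n+1}x)-f(2^nx)\|-\|2^nx\|\,\bigr|\le\varepsilon$ only says consecutive terms of the rescaled sequence differ in norm by roughly $\|x\|/2$, not by $O(2^{-n}\varepsilon)$. What is actually needed is an estimate of the form $\|f(2x)-2f(x)\|\le C\varepsilon$, i.e.\ an approximate-midpoint lemma (since $2^nx$ is the metric midpoint of $0$ and $2^{n+1}x$), and in a general Banach space the set of approximate metric midpoints of $f(u)$ and $f(v)$ is far too large for the $\varepsilon$-isometry inequalities alone to pin $f$ down: the paper's own example $f(t)=(t,\sin t)$ into $\ell_\infty^2$ is a $0$-isometry for which $f$ of a midpoint is nowhere near the midpoint of the images. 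Controlling this is precisely where surjectivity must enter (as in Gevirtz's argument), not merely in upgrading $U$ to a surjection afterwards. Your surjectivity step is also shaky as written: a bounded sequence $2^{-n}x_n$ need not have a norm-cluster point in infinite dimensions; the correct route is to note $U(X)$ is closed (isometric image of a complete space) and dense, since by homogeneity and the crude bound one gets $\|U(2^{-n}x_n)-y\|=2^{-n}\|U(x_n)-f(x_n)\|\le 2^{-n}C\varepsilon\to0$.

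The second and decisive gap is the constant. You explicitly defer the sharpening from $O(\varepsilon)$ to $2\varepsilon$ to an unspecified ``delicate extremal argument'' that ``must be pushed to produce a numerical contradiction.'' But that sharpening \emph{is} the theorem: the existence of a surjective linear isometry $U$ with $\|f-U\|\le C\varepsilon$ for some larger constant (e.g.\ $C=5$) was already known before \cite{om}, and the reduction from $C$ to the optimal $2$ occupies the substantive part of their paper. A proposal whose crux is ``the four inequalities, combined with the isometric behaviour of $U$ at large scale, must then be pushed to a contradiction'' names the goal but supplies no mechanism for reaching it; nothing in your sketch indicates how the number $2$ (rather than $3$ or $4$) would emerge from the juggling. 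Your closing observation that the constant $2$ is optimal via a one-dimensional perturbation is correct and consistent with the literature, but it does not substitute for the missing argument. In short: the scaffolding (dyadic limit, Mazur--Ulam to get linearity from a surjective isometry fixing $0$, density argument for surjectivity) is the right skeleton once repaired as above, but both the Cauchy step and the sharp estimate are asserted rather than proved.
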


 The study of nonsurjective  $\eps$-isometry has also brought to mathematicians' attention (see, for instance \cite{dil},
 \cite{om}, \cite{qian},  \cite{sm} and \cite{ta}).
Qian\cite{qian} first proposed  the following problem in 1995.
\begin{problem} Whether there exists a constant $\gamma>0$ depending only on
$X$ and $Y$ with the following property: For each standard
$\varepsilon$-isometry $f:X\rightarrow Y$ there is a
bounded linear operator $T:L(f)\rightarrow X$ such
that
$$(1.2) \;\;\;\;\;\;\;\;\;\;\;\; \;\;\; \;\;\;\;\;\;\;\;\;\;\|Tf(x)-x\|\leq \gamma\varepsilon,\;
\;{\rm for\;all}\;~x\in X.\;\;\;\;\;\;\;\;\;\;\;\;\;\;\;\;\;\;\;\;\;\;\;\;\;\;\;\;\;\;\;\;\;\;$$
\end{problem}

Then he showed that the answer is affirmative if both $X$ and $Y$ are
$L_p$ spaces. \v{S}emrl and V\"{a}is\"{a}l\"{a} \cite{sm} further presented a sharp estimate of (1.2)
with $\gamma=2$ if both $X$ and $Y$ are $L^p$ spaces for $1<p<\infty$.

However,  Qian (in the same paper \cite{qian}) presented the following simple counterexample.
\begin{example}[Qian]
Given $\varepsilon>0$, and let $Y$ be a separable Banach space admitting a
uncomplemented  closed subspace $X$. Assume that
$g$ is a bijective mapping from $X$ onto the closed unit ball $B_Y$ of $Y$ with $g(0)=0$. We define a
map $f:X\rightarrow Y$ by $f(x)=x+\varepsilon g(x)/2$ for all
$x\in X$. Then $f$ is an $\varepsilon$-isometry with $f(0)=0$ and
$L(f)=Y$. But there are no such $T$ and $\gamma$ satisfying (1.2).
\end{example}

This disappointment makes us to search for (1) some weaker stability version and (2) some appropriate complementability
assumption on some subspaces of $Y$ associated with the mapping. Cheng, Dong and Zhang \cite{cheng} showed the following
theorems about the two questions.

  \begin{theorem}[Cheng-Dong-Zhang]
 Let $X$ and $Y$ be Banach spaces,
  and let $f:X\rightarrow Y$ be a standard $\eps$-isometry for
 some $\eps\geq 0$. Then for every $x^*\in X^*$, there exists $\phi\in Y^*$ with $\|\phi\|=\|x^*\|\equiv r$ such that
 $$(1.3)\;\;\;\;\;\;\;\;\;\;\;\;\;\;\;\;\;\;|\langle\phi,f(x)\rangle-\langle x^*,x\rangle|\leq4\eps r, \;{\rm for\;all}
 \;x\in X.\;\;\;\;\;\;\;\;\;\;\;\;\;\;\;\;\;\;\;\;\;\;\;\;\;\;\;\;\;\;$$
 \end{theorem}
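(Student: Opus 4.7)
The natural approach is Hahn-Banach separation with a tailored sublinear functional. After normalizing so that $r = \|x^*\| = 1$, the goal reduces to finding $\phi \in Y^*$ with $\|\phi\| \le 1$ and $|\phi(f(x)) - x^*(x)| \le 4\eps$ for every $x \in X$.

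I would build a sublinear functional $p : Y \to \R$ that is dominated by the norm yet encodes the approximate identification $f(x) \leftrightarrow x^*(x)$ modulo errors of order $\eps$. A natural candidate, obtained by taking the positive-cone / convex closure of the pairs $(f(x), x^*(x))$ with an $\eps$-slack built in, is
$$p(y) = \inf\left\{\sum_{i=1}^n t_i\, x^*(x_i) + s : n \ge 1,\ t_i \ge 0,\ s \ge 0,\ x_i \in X,\ \Big\|y - \sum_{i=1}^n t_i f(x_i)\Big\| \le s + 2\eps \sum_{i=1}^n t_i\right\}.$$
Positive homogeneity follows by rescaling $(t_i, s) \mapsto (\lambda t_i, \lambda s)$ and subadditivity by concatenating two decompositions. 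Taking all $t_i = 0$, $s = \|y\|$ gives $p \le \|\cdot\|$; taking $n=1$, $t_1=1$, $x_1 = x$, $s = 0$ gives $p(f(x)) \le x^*(x)$. Applying Hahn-Banach produces a linear $\phi$ on $Y$ with $\phi \le p$ pointwise, and thus $\|\phi\| \le 1$ together with $\phi(f(x)) \le x^*(x)$ come for free.

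The matching lower bound will be the heart of the argument and the main obstacle. One needs $p(-f(x)) \le -x^*(x) + 4\eps$, so that $\phi(-f(x)) \le p(-f(x))$ translates into $\phi(f(x)) \ge x^*(x) - 4\eps$. Unlike $p(f(x))$, there is no trivial decomposition of $-f(x)$ to exploit; one must convert the $\eps$-isometry inequality $\|f(x) - f(y)\| \ge \|x-y\| - \eps$ into a lower bound on the $x^*$-cost of any admissible decomposition $\|-f(x) - \sum t_i f(x_i)\| \le s + 2\eps \sum t_i$. The elementary bound $\|x\| \ge |x^*(x)|$ lets norm defects be traded for $x^*$-defects, with the factor $4 = 2\cdot 2$ arising as the sum of a $2\eps$-slack built into $p$ and a $2\eps$-contribution from converting the $\eps$-isometry inequality into $x^*$-terms. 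It may be necessary to enlarge $p$ (for instance by allowing signed coefficients, or by working symmetrically in $Y \times \R$ to accommodate $\pm f(x)$) to make this estimate go through cleanly. Finally, the norm equality $\|\phi\| = r$ (rather than just $\|\phi\| \le r$) follows by evaluating the $4\eps$-approximation at vectors $x$ with $x^*(x) \approx r\|x\|$ and $\|x\|$ large, which forces $\|\phi\| \ge r - O(\eps/\|x\|)$.
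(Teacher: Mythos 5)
Note first that the paper you are working from does not prove Theorem 1.7 at all: it is quoted from Cheng--Dong--Zhang \cite{cheng}, so your attempt has to be measured against the proof in that reference. Your plan correctly guesses the final mechanism (a Hahn--Banach sandwich under a sublinear functional dominated by the norm), but the specific functional $p$ you propose is degenerate: $p(0)=-\infty$, so Hahn--Banach cannot even be invoked. Concretely, normalize $r=\|x^*\|=1$, fix $0<\delta<\eps$, and choose a unit vector $u$ with $x^*(u)>1-\delta$. For $y=0$ take the one-term decomposition $n=1$, $t_1=t$, $x_1=-u$. The constraint $\|0-t f(-u)\|\le s+2\eps t$ admits $s=\max\bigl(0,\,t(\|f(-u)\|-2\eps)\bigr)\le t(1-\eps)$, since $\|f(-u)\|\le 1+\eps$; the infimand is then $t\,x^*(-u)+s\le -t(1-\delta)+t(1-\eps)=-(\eps-\delta)t\to-\infty$ as $t\to\infty$. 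Hence $p(0)=-\infty$, and by appending such a term to any admissible decomposition the same farming trick gives $p\equiv-\infty$; since any linear $\phi\le p$ would have to satisfy $0=\phi(0)\le p(0)$, no dominated functional exists. The culprit is structural: the slack $2\eps\sum t_i$ scales linearly in the total mass $\sum t_i$ and can be harvested by cheap, norm-heavy terms in directions where $x^*$ is nearly $-\|\cdot\|$; any positively homogeneous per-term slack has the same defect, while an additive $O(\eps)$ slack destroys the homogeneity you need for Hahn--Banach.

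This confirms that the step you yourself flagged as ``the heart of the argument'' is not a verification left to the reader but the entire content of the theorem, and that no simple per-term bookkeeping converts the pointwise inequality $\|f(x)-f(y)\|\ge\|x-y\|-\eps$ into control of the $x^*$-cost of convex combinations of $f(X)$. The published proof in \cite{cheng} works quite differently: it combines a separation (Hahn--Banach) argument applied to convex sets built from images of the sublevel/superlevel sets $\{x:\langle x^*,x\rangle\ge\alpha\}$ with an asymptotic analysis of $f$ along rays --- roughly, one exploits that $(f(tx))_{t>0}$ is an $\eps$-geodesic ray, passes to $w^*$-cluster points (of $f(nx)/n$, and of support functionals at far-out points $f(tu)$ with $x^*(u)\approx 1$) in $Y^{**}$ and $Y^*$, and only then extracts $\phi$ with the constant $4$. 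Your closing observation --- that $\|\phi\|\le r$ upgrades to $\|\phi\|=r$ by testing the estimate at $x$ with $\langle x^*,x\rangle\approx r\|x\|$ and $\|x\|\to\infty$ --- is correct, but it is the only step of the proposal that works as stated.
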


 For a standard $\eps$-isometry $f: X\rightarrow Y$, let  $$Y\supset E={\rm the\; annihilator\;of\; all\; bounded\;linear\;
 functionals\;and\;bounded\;on}$$$$ C(f)\equiv\overline{\rm co}(f(X),-f(X)),$$ i.e.
 $$(1.4)\;\;\;\;\;\;\;\;\;\;E=\{y\in Y:\langle y^*,y\rangle=0, y^*\in Y^*\;{\rm is\;bounded\;on}\;C(f)\}.\;\;\;\;\;\;\;\;\;\;\;\;\;\;\;\;$$

 \begin{theorem}[Cheng-Dong-Zhang]
 Let $X$ and $Y$ be Banach spaces,
  and let $f:X\rightarrow Y$ be a standard $\eps$-isometry for
 some $\eps\geq 0$. Then

 (i) If $Y$ is reflexive and if $E$ is $\alpha$-complemented in $Y$, then there is a bounded linear operator $T:Y\rightarrow X$ with
 $\|T\|\leq\alpha$ such that
$$\;\;\;\;\;\;\;\;\;\;\;\;\;\;\;\;\;\;\;\;\;\;\;\;\;\;\;\|Tf(x)-x\|\leq4\eps, \;{\text{ for all }}x\in X.
\;\;\;\;\;\;\;\;\;\;\;\;\;\;\;\;\;\;\;\;\;\;\;\;\;\;\;\;\;\;\;\;\;\;\;\;\;\;\;\;$$

(ii) If $Y$ is reflexive, smooth and locally uniformly convex, and if $E$ is $\alpha$-complemented in $Y$, then
there is a bounded linear operator $T:Y\rightarrow X$ with $\|T\|\leq\alpha$ such that the following sharp estimate holds
$$\;\;\;\;\;\;\;\;\;\;\;\;\;\;\;\;\;\;\;\;\;\;\;\;\;\;\;\|Tf(x)-x\|\leq2\eps, \;{\text{ for all }}x\in X.\;\;\;\;\;\;\;\;\;
\;\;\;\;\;\;\;\;\;\;\;\;\;\;\;\;\;\;\;\;\;\;\;\;\;\;\;\;\;\;\;$$

 \end{theorem}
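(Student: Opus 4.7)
The plan is to bootstrap the pointwise estimate of Theorem~1.7 into a bounded linear operator via a selection argument exploiting reflexivity of $Y$, and then to use the $\alpha$-complementation of $E$ to pass from an operator valued in $X^{**}$ to one valued in $X$.

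For each $x^*\in X^*$, Theorem~1.7 supplies $\phi_{x^*}\in Y^*$ with $\|\phi_{x^*}\|=\|x^*\|$ and
\[\bigl|\langle\phi_{x^*},f(x)\rangle-\langle x^*,x\rangle\bigr|\leq 4\eps\|x^*\|\quad(x\in X).\]
Let $\Phi(x^*)\subset Y^*$ denote the (non-empty, convex, norm-bounded) set of all such $\phi_{x^*}$; by reflexivity of $Y$ each $\Phi(x^*)$ is weakly compact. I would then produce a linear selection $S\colon X^*\to Y^*$ with $\|S\|\leq 1$ and $Sx^*\in\Phi(x^*)$ for all $x^*$. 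The construction treats each finite-dimensional subspace $F\subset X^*$ by a finite-dimensional convex selection / Hahn--Banach extension argument, and then passes to a limit along the directed net of finite-dimensional subspaces using weak compactness in $B_{Y^*}$.

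The adjoint $T_0=S^*\colon Y^{**}=Y\to X^{**}$ then satisfies
\[\bigl|\langle x^*,T_0f(x)-x\rangle\bigr|=\bigl|\langle Sx^*,f(x)\rangle-\langle x^*,x\rangle\bigr|\leq 4\eps\|x^*\|,\]
whence $\|T_0f(x)-x\|_{X^{**}}\leq 4\eps$. But a priori $T_0$ need not map into $X\subset X^{**}$. This is where the $\alpha$-complementation of $E$ enters: a projection on $Y$ associated with $E$ dualizes to a projection of $Y^*$ with range in $E^\perp$ and norm controlled by $\alpha$. After replacing $S$ by its composition with this dual projection, the resulting operator becomes $w^*$-to-$w^*$ continuous, and its pre-adjoint is the sought $T\colon Y\to X$ with $\|T\|\leq\alpha$ and $\|Tf(x)-x\|\leq 4\eps$. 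Part~(ii) follows from the same scheme once the constant $4$ in Theorem~1.7 is sharpened to $2$; smoothness and local uniform convexity of $Y$ force each point's norming functional to be unique (single-valued, continuous duality map), permitting the sharper bound in the spirit of the Omladi\v{c}--\v{S}emrl estimate.

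The principal obstacles are twofold. First, the set-valued map $\Phi$ has no useful sub-additivity, since $4\eps(\|x^*\|+\|y^*\|)$ typically exceeds $4\eps\|x^*+y^*\|$; thus the linear selection step genuinely requires reflexivity rather than a naive averaging. Second, one must verify that replacing $S$ by its projection onto $E^\perp$ still satisfies the approximation inequality for $f$, which rests on the definition of $E$ as the precise obstruction preventing bounded linear functionals on $Y$ from being uniformly bounded on $C(f)$: functionals lost in passing to $E^\perp$ are exactly the ones irrelevant to reconstructing $x$ from $f(x)$.
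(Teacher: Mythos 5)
The paper itself contains no proof of this statement: Theorem 1.8 is quoted as background from Cheng--Dong--Zhang \cite{cheng}, so your proposal can only be measured against the cited source's argument and on its own terms. On its own terms it has a fatal gap at its first and central step: the linear selection $S\colon X^*\to Y^*$ with $\|S\|\leq 1$ and $Sx^*\in\Phi(x^*)$, produced \emph{before} the complementability of $E$ is used. If such an $S$ existed whenever $Y$ is reflexive, then $T_0:=S^*|_Y\colon Y=Y^{**}\to X^{**}$ would satisfy $\|T_0f(x)-x\|_{X^{**}}\leq 4\eps$ for every standard $\eps$-isometry, with no hypothesis on $E$ at all. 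Run this against Qian's Example 1.6 with $Y=\ell_p$ ($1<p<\infty$, $p\neq2$) and $X$ a closed uncomplemented subspace: for $f(x)=x+\eps g(x)/2$ one has $\|f(nx)-nx\|\leq\eps/2$, hence $\|T_0(nx)-nx\|\leq 4\eps+\eps\|T_0\|/2$ for all $n$; dividing by $n$ forces $T_0|_X=I_X$, and since $X$ is reflexive ($X^{**}=X$), $T_0$ would be a bounded projection of $Y$ onto $X$ --- a contradiction. So the obstacle you flag yourself (no subadditivity of $\Phi$) is not a technicality that reflexivity circumvents via finite-dimensional Hahn--Banach plus weak compactness: no linear selection exists in general, and the complementation of $E$ must enter the construction of $S$ itself, not be appended afterwards. (Minor point in the same step: $\Phi(x^*)$ as you define it, with the equality $\|\phi\|=\|x^*\|$, is not convex; weak compactness needs the relaxation $\|\phi\|\leq\|x^*\|$.)

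The repair step is also broken as written. What \emph{is} canonical is linearity modulo $B(f):=\{y^*\in Y^*: y^*\ \mathrm{bounded\ on}\ C(f)\}$: any two admissible functionals for the same $x^*$, and likewise $\phi_{x^*}+\phi_{y^*}-\phi_{x^*+y^*}$, are bounded on $f(X)$ by constants such as $4\eps(\|x^*\|+\|y^*\|+\|x^*+y^*\|)$, hence lie in $B(f)$; since $B(f)$ vanishes on $E$ by the definition (1.4), the map $x^*\mapsto\phi_{x^*}|_E\in E^*$ is a well-defined linear contraction, and this is where $E$'s complementation has to do its work. Your projection step has the geometry backwards: if $P$ projects $Y$ onto $E$ with $\|P\|\leq\alpha$, then $P^*$ has kernel $E^\perp$, whereas a projection of $Y^*$ with \emph{range} in $E^\perp$ would retain exactly the useless component, because functionals bounded on $C(f)$ carry no information about $x$ (their pairing with $f(x)$ stays bounded while $\langle x^*,x\rangle$ does not). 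Worse, the surviving estimate is never checked and does not follow: perturbing $\phi_{x^*}$ by an element of $B(f)$, or of its $w^*$-closure $E^\perp$, changes $\langle\phi_{x^*},f(x)\rangle$ by an amount bounded only by a constant depending on the perturbing functional, not by $4\eps\|x^*\|$; equivalently $\langle P^*\phi_{x^*},f(x)\rangle=\langle\phi_{x^*},Pf(x)\rangle$ and $\|Pf(x)-f(x)\|$ is uncontrolled. Your closing claim that the functionals lost in passing to the projected operator are ``exactly the ones irrelevant to reconstructing $x$'' is precisely the assertion requiring proof; bridging it is the substantive content of the cited argument, where reflexivity does real work (for instance, weak cluster points of $f(nx)/n$ exist and lie in $E$, since $\langle y^*,f(nx)\rangle/n\to0$ for every $y^*\in B(f)$). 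The sketch of part (ii) inherits all of these problems.
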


{\bf Universal stability spaces of $\eps$-isometries.}
For study of the stability of $\eps$-isometry of Banach spaces, the following two questions are very natural.

\begin{problem}
Is there a  characterization for the class of Banach spaces $X$
satisfying that for every Banach space $Y$ there is $\infty>\gamma>0$ such that for every $\eps$-isometry
$f:X\rightarrow Y$ with $f(0)=0$,there exists a bounded linear
operator $T:$ $L(f)\rightarrow X$  such that
$$\;\;\;\;\;\;
\|Tf(x)-x\|\leq \gamma\varepsilon ,\;\;for\;all \;x\in X,$$ that is, inequality (1.2) holds.
Every space $X$ of this class is said to be a universal left-stability space.
\end{problem}

\begin{problem}
 Can we characterize the class of Banach spaces $Y$, such that for every Banach space $X$ there is $\infty>\gamma>0$ such that for every $\eps$-isometry $f:X\rightarrow Y$ with $f(0)=0$,
there exists a bounded linear operator $T:$
$L(f)\rightarrow X$  such that
$$\;\;\;\;\;\;
\|Tf(x)-x\|\leq \gamma\varepsilon ,\;\;for\;all \;x\in X.$$
 Every space $Y$ of this class is called a universal right-stability space.
\end{problem}

Recently, Cheng, Dai, Dong and Zhou \cite{cheng1}  studied
properties of  universal left-stability  and right-stability spaces. As a
result, they proved that up to linear isomorphism, universal right-stability
spaces are just Hilbert spaces; Every injective space is universally
left-stable, and a Banach space $X$ which is linear
isomorphic to a subspace of $\ell_\infty$ is universally left-stable
if and only if it is linearly isomorphic to $\ell_\infty$. They also
verified that a separable space $X$ satisfies that the pair $(X,Y)$
is stable for every separable $Y$ if and only if $X$ is linearly
isomorphic to $c_0$.

In this paper, we further show that if a dual Banach space $X$ is
universally left-stable, then it is isometric to a complemented
$w^*$-closed subspace of $\ell_\infty(\Gamma)$ for some set
$\Gamma$, hence, an injective space; and that a Banach space is
universally left-stable
if and only if it is a cardinality injective space. Therefore, the universal left-stability of
Banach spaces is invariant under linear isomorphism.\\

All symbols and notations in this paper are standard. We use $X$ to
denote a real Banach space and $X^*$ its dual. $B_X$ and $S_X$
denote the closed unit ball and the unit sphere of $X$,
respectively. For a subspace $E\subset X$, $E^\bot$ denotes the
annihilator of $E$, i.e. $E^\bot=\{x^*\in X^*:\langle
x^*,e\rangle=0\;{\rm for\;all}\;e\in E\}$. Given a bounded linear
operator $T:X\rightarrow Y$, $T^*:Y^*\rightarrow X^*$ stands for its
conjugate operator. For a subset $A\subset X \;(X^*)$,
$\overline{A}$,  ($w^*\text{-}\overline{A}$) and  $\text{co}(A)$
stand for the closure (the $w^*$-closure), and the convex hull of
$A$, respectively.

\section{ Universal left-stability dual spaces for $\eps$-isometries}

In this section, we search for some properties of the class of universal left-stability spaces for $\eps$-isometries.
After discussion of cardinality of Banach spaces, we show that a dual Banach space is universally left-stable if and only if
it is an injective space.

Recall that a Banach space $X (Y)$ is universally left(right)-stable if it satisfies that for every Banach space $Y (X)$ there is $\infty>\gamma>0$
such that for every standard $\eps$-isometry $f:X\rightarrow Y$,
there exists a bounded linear operator $T:$
$L(f)\rightarrow X$ such that
$$(2.1)\;\;\;\;\;\;\;\;\;\;\;\;\;\;\;\;\;\;\;\;\;\;\;\;
\|Tf(x)-x\|\leq \gamma\varepsilon ,\;\;for\;all \;x\in
X.\;\;\;\;\;\;\;\;\;\;\;\;\;\;\;\;\;\;\;\;\;\;\;\;\;\;\;\;\;\;\;\;\;\;\;\;$$
As a result we show that (1) inequality (2.1) holds for every Banach
space $Y$ if and only if $X$ is a cardinality injective Banach
space; (2) if a dual Banach space $X$ is universally left-stable,
then it is isometric to a complemented $w^*$-closed subspace of
$\ell_\infty(\Gamma)$ for some set $\Gamma$, hence, an injective
space.

The following lemma is presented in \cite{cheng1}.

\begin{lemma}
Let $X$ be a closed subspace of a Banach space $Y$. If $\text{card}(X)=\text{card}(Y)$, then  for every $\eps>0$ there is a standard $\eps$-isometry $f:X\rightarrow Y$
such that

(1)  $L(f)\equiv\overline{\text{span}}f(X)=Y$;

(2)  $X$ is complemented whenever $f$ is stable.
\end{lemma}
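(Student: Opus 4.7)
The plan is to construct $f$ explicitly, along the lines of Qian's example above. Since $Y=\bigcup_{n\in\N}nB_Y$, one has $\mathrm{card}(Y)=\mathrm{card}(B_Y)$, and combined with the hypothesis this yields $\mathrm{card}(X)=\mathrm{card}(B_Y)$, so I fix a bijection $g:X\to B_Y$ with $g(0)=0$ and set $f(x)=x+\tfrac{\eps}{2}g(x)$. Since $f(0)=0$ and $g(X)=B_Y$, the triangle inequality at once gives
\[\bigl|\|f(x)-f(y)\|-\|x-y\|\bigr|\le\tfrac{\eps}{2}\|g(x)-g(y)\|\le\eps,\]
so $f$ is a standard $\eps$-isometry.

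For (1), I would establish $L(f)=Y$ by Hahn--Banach: if some $y^*\in Y^*$ annihilates $f(X)$, then
\[\langle y^*,x\rangle=-\tfrac{\eps}{2}\langle y^*,g(x)\rangle,\qquad x\in X,\]
so the linear functional $y^*|_X$ is uniformly bounded on all of $X$ by $\tfrac{\eps}{2}\|y^*\|$. The scaling identity $\langle y^*,\lambda x\rangle=\lambda\langle y^*,x\rangle$ then forces $y^*|_X\equiv0$, from which $\langle y^*,g(x)\rangle=0$ for every $x\in X$, i.e.\ $y^*$ vanishes on $B_Y=g(X)$, so $y^*=0$.

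For (2), assume $f$ is stable, with $T:L(f)=Y\to X$ bounded linear and $\|Tf(x)-x\|\le\gamma\eps$ for all $x\in X$. Expanding $Tf(x)=Tx+\tfrac{\eps}{2}Tg(x)$ and using $\|g(x)\|\le 1$, one gets
\[\|Tx-x\|\le\gamma\eps+\tfrac{\eps}{2}\|T\|,\qquad x\in X.\]
The same scaling argument, applied now to the linear operator $T|_X-I_X:X\to X$ whose image is uniformly norm-bounded, forces $T|_X=I_X$. Hence $T:Y\to X$ is a bounded projection onto $X$, giving $Y=X\oplus\ker T$.

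The only step with any real content is the Hahn--Banach argument in (1); the complementation in (2) is then a two-line deduction. The cardinality hypothesis enters precisely by permitting $g$ to be chosen \emph{onto} $B_Y$, which is exactly what drives both the density argument in (1) and the bound $\|Tg(x)\|\le\|T\|$ in (2).
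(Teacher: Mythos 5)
Your proof is correct and follows essentially the same approach the paper relies on: Qian's perturbation $f(x)=x+\frac{\eps}{2}g(x)$ with $g$ a bijection of $X$ onto $B_Y$, which is exactly the construction the paper reuses inside the proof of Theorem 2.7 (the lemma itself being quoted from \cite{cheng1}), and your scaling arguments for $y^*|_X=0$ and $T|_X=I_X$ are sound. The only cosmetic difference is in verifying $L(f)=Y$: the paper observes $x=\lim_{n\to\infty}f(nx)/n\in L(f)$, hence $g(x)=\frac{2}{\eps}\bigl(f(x)-x\bigr)\in L(f)$ and so $B_Y\subset L(f)$, whereas you argue by Hahn--Banach; both are immediate.
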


A Banach space $X$ is said to be  injective if it has the following
extension property: Every bounded linear operator from a closed
subspace of a Banach space into $X$ can be extended to be a bounded
operator on the whole space. $X$ is called isometrically injective
if every such bounded operator has a norm-preserved extension (See,
for example \cite{Alb}). Goondner \cite{Goo} introduced a family of
Banach spaces coinciding with the family of injective spaces: for
any $\lambda\geq1$, a Banach space X is a $P_\lambda$-space if,
whenever $X$ is isometrically embedded in another Banach space,
there is a projection onto the image of X with norm not larger than
$\lambda$. The following result was due to Day \cite{day} (see,
also,  Wolfe \cite{w}, Fabian et al. \cite{fa}, p. 242).

\begin{proposition} A Banach space $X$ is (isometrically) injective
if and only if it is a $P_\lambda$ ($P_1$)-space for some $\lambda\geq1$.
\end{proposition}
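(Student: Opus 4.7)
The plan is to establish both implications by exploiting the universal role of $\ell_\infty(\Gamma)$, which is itself a $P_1$-space by the standard coordinatewise Hahn--Banach argument: any bounded operator $T:E\to\ell_\infty(\Gamma)$ with $E\subset Z$ has components $T_\gamma\in E^*$ given by $T_\gamma(e)=(T(e))_\gamma$, each of which extends to some $\tilde T_\gamma\in Z^*$ without increase of norm, giving an extension $\tilde T:Z\to\ell_\infty(\Gamma)$ with $\|\tilde T\|=\|T\|$. I would record this as a preliminary observation before attacking either direction.

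For the implication ($\Leftarrow$), suppose $X$ is a $P_\lambda$-space. Embed $X$ isometrically into $\ell_\infty(\Gamma)$ via the canonical map $x\mapsto(x^*(x))_{x^*\in B_{X^*}}$, with $\Gamma=B_{X^*}$. By hypothesis there is a projection $P:\ell_\infty(\Gamma)\to X$ with $\|P\|\leq\lambda$. Now given an arbitrary bounded linear operator $T:E\to X$ with $E$ a closed subspace of a Banach space $Z$, view $T$ as taking values in $\ell_\infty(\Gamma)$, extend by the $P_1$-property to $\tilde T:Z\to\ell_\infty(\Gamma)$, and set $\hat T:=P\circ\tilde T$. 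Then $\hat T:Z\to X$ extends $T$ and satisfies $\|\hat T\|\leq\lambda\|T\|$, which gives injectivity. In the $P_1$ case the construction produces a norm-preserving extension, yielding isometric injectivity.

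For ($\Rightarrow$), suppose $X$ is injective. Applying the extension property to the identity $I_X:X\to X$, with $X$ viewed isometrically as a subspace of $\ell_\infty(B_{X^*})$, one obtains a bounded projection $P:\ell_\infty(B_{X^*})\to X$; set $\lambda:=\|P\|$. The main obstacle I anticipate is proving that this single $\lambda$ is a \emph{uniform} bound valid for \emph{every} other isometric embedding $X\hookrightarrow Y$, not just the distinguished one into $\ell_\infty(B_{X^*})$. The resolution is to extend the inclusion $\iota:X\hookrightarrow\ell_\infty(B_{X^*})$ along $X\hookrightarrow Y$ to $\tilde\iota:Y\to\ell_\infty(B_{X^*})$ with $\|\tilde\iota\|\leq 1$, once again using that $\ell_\infty(B_{X^*})$ is $P_1$; then $P\circ\tilde\iota:Y\to X$ is a projection onto $X$ with norm at most $\lambda$, so $X$ is $P_\lambda$. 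In the isometric case, applying norm-preserving extendability to $I_X$ gives $\|P\|=1$, whence $X$ is $P_1$, completing the characterization attributed to Day and Goodner.
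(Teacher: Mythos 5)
Your proof is correct. Note, however, that the paper does not prove this proposition at all: it is stated as a known result and attributed to Day (with further references to Wolfe and to Fabian et al.), so there is no internal argument to compare against. Your proposal is the standard classical proof from those sources: the coordinatewise Hahn--Banach observation that $\ell_\infty(\Gamma)$ admits norm-preserving extensions, the canonical isometric embedding $x\mapsto (x^*(x))_{x^*\in B_{X^*}}$, and the composition trick $P\circ\tilde\iota$ in the forward direction. That last step is the genuine crux, and you handle it correctly: the paper's definition of ``injective'' gives no uniform constant a priori, and obtaining a single $\lambda$ valid for \emph{every} isometric embedding $X\hookrightarrow Y$ (as the $P_\lambda$ definition demands) requires exactly your device of applying the extension property once, to $I_X$ over the distinguished embedding into $\ell_\infty(B_{X^*})$, and then transporting the resulting projection along any other embedding via a norm-one extension of the inclusion. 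One cosmetic slip: in the ($\Leftarrow$) direction you invoke ``the $P_1$-property'' of $\ell_\infty(\Gamma)$ when extending $\tilde T$, but what you actually use (and correctly proved in your preliminary observation) is its $1$-injectivity as an extension property; the two are equivalent for $\ell_\infty(\Gamma)$, but the label should match the statement you established.
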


Goondner \cite{Goo}, Nachbin \cite{Nac} and Kelley \cite{Kel} characterized the isometrically injective spaces.

\begin{theorem}[ Goodner-Kelley-Nachbin, 1949-1952] A Banach space
is isometrically injective if and only if it is isometrically
isomorphic to the space of continuous functions $C(K)$ on an
extremely disconnected compact Hausdorff space $K$, i.e. the space
$K$ such that the closure of any open set is open in $K$.

\end{theorem}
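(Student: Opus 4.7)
The plan is to establish the two directions separately, using Proposition~2.2 to identify isometric injectivity with the $P_1$-property. The central bridge is Kakutani's representation theorem, which characterizes the spaces $C(K)$ as the abstract $M$-spaces with order unit, together with the classical equivalence between extreme disconnectedness of $K$ and Dedekind completeness of $C(K)$.

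For the sufficient direction, let $K$ be extremally disconnected compact Hausdorff, let $Y\hookrightarrow Z$ be an isometric inclusion, and let $T:Y\to C(K)$ be bounded. By a Zorn's lemma argument it is enough to extend $T$ across a single vector $z_0\in Z\setminus Y$ without enlarging the norm. This reduces to producing $g\in C(K)$ with
\[ \sup_{y\in Y}\bigl(Ty - \|T\|\cdot\|y-z_0\|\,\mathbf{1}\bigr) \;\le\; g \;\le\; \inf_{y\in Y}\bigl(Ty + \|T\|\cdot\|y-z_0\|\,\mathbf{1}\bigr). \]
The triangle inequality shows the lower envelope does not exceed the upper one pointwise; Dedekind completeness of $C(K)$ (equivalent to extreme disconnectedness of $K$) then supplies $g$ as the supremum of the lower envelope inside $C(K)$, and the resulting extension preserves the operator norm.

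For the necessary direction, assume $X$ is isometrically injective. Embed $X$ isometrically into $\ell_\infty(B_{X^*})=C(\beta B_{X^*})$ via the canonical evaluation map $x\mapsto(\langle x^*,x\rangle)_{x^*\in B_{X^*}}$; by the $P_1$-property there is a norm-one projection $P$ of $\ell_\infty(B_{X^*})$ onto $X$. The key step is to equip $X$ with an $M$-space structure: first observe that $X$, being a contractively complemented subspace of a dual space, is itself a dual space and hence $B_X$ is $w^*$-compact with extreme points; pick one such extreme point as an order unit $u$, and verify the $M$-space axioms $\|x\vee y\|=\|x\|\vee\|y\|$ by transporting the pointwise structure of $\ell_\infty(B_{X^*})$ through $P$. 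Kakutani's theorem then yields an isometric isomorphism $X\cong C(K)$ for some compact Hausdorff $K$. Extreme disconnectedness of $K$ follows from Dedekind completeness of $X$: a bounded increasing net in $X$ has a pointwise supremum in $\ell_\infty(B_{X^*})$, and applying $P$ produces the desired supremum in $X$.

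The main obstacle is the order-theoretic step in the necessary direction. The projection $P$ is only contractive, not a lattice homomorphism, so the pointwise order on $\ell_\infty(B_{X^*})$ does not descend directly to $X$; one must define the order intrinsically in terms of the chosen order unit $u$ and then verify the $M$-axioms. The classical workaround of Nachbin--Kelley is to first pass to the bidual $X^{**}$, where the weak-$*$ compactness of $B_{X^{**}}$ and the Krein--Milman theorem provide an abundance of extreme points from which to manufacture the unit, and then descend the structure back to $X$ via the $P_1$-projection. Once the $M$-space structure is in place, Kakutani representation and the Dedekind-completeness/extreme-disconnectedness dictionary finish the proof.
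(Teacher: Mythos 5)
A preliminary remark on the comparison itself: the paper does not prove this statement at all --- it is quoted as a classical theorem with citations to Goodner, Nachbin and Kelley --- so your proposal can only be judged against the classical arguments it invokes. Your sufficiency direction is essentially Goodner's standard argument and is sound in outline: the Zorn's lemma reduction to a one-vector extension, the triangle-inequality sandwich between the lower and upper envelopes, and Dedekind completeness of $C(K)$ (the Stone--Nakano equivalence with extremal disconnectedness of $K$) to produce the interpolating function $g$.

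The necessity direction, however, has a genuine gap at exactly the point where the theorem is hard. You assert that $X$, being contractively complemented in a dual space, is itself a dual space, whence $B_X$ has extreme points to serve as an order unit. Both halves fail. A norm-one complemented subspace of a dual space need not be a dual space: $L^1[0,1]$ is an $L$-summand in, hence $1$-complemented in, its bidual, yet is isomorphic to no dual space (a separable dual would have the Radon--Nikod\'ym property, which $L^1[0,1]$ fails). Worse, isometrically injective spaces themselves need not be dual spaces: by Dixmier's theorem $C(K)$ is isometrically a dual space only when $K$ is hyperstonean, while every Stonean $K$ gives an injective $C(K)$, and Stonean non-hyperstonean $K$ exist --- so any argument deducing ``dual, hence extreme points of $B_X$'' is doomed. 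Nor do extreme points descend through $P$: a contractive projection maps extreme points of the larger ball to arbitrary points of $B_X$. This is precisely the historical gap: Goodner and Nachbin proved necessity only under the added hypothesis that $B_X$ has an extreme point, and Kelley's 1952 contribution was to close it by a different device --- a Zorn-maximal selection of a relatively open subset $U$, with $U\cap -U=\emptyset$, of the $w^*$-closure of the extreme points of $B_{X^*}$, taking $K=\overline{U}$ --- not by passing to $X^{**}$ and ``descending the structure back via the $P_1$-projection,'' which founders on the same non-positivity of $P$ that you yourself flag and then set aside. Your Dedekind-completeness step inherits the flaw: $P$ applied to a pointwise supremum in $\ell_\infty(B_{X^*})$ is a supremum in $X$ only if $P$ is positive, and positivity would require $P$ to fix an order unit; the constant function $\mathbf{1}$ on $B_{X^*}$ does not lie in the canonical image of $X$, so no such unit is available until after the representation you are trying to prove.
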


\begin{remark}
For any set $\Gamma$ endowed with the discrete metric topology,
$\ell_\infty(\Gamma)$ is isomorphic to
$C(K_\Gamma)$, where
$K_\Gamma$ is the Stone-$\check{C}$ech compactification of $\Gamma$.
By Goodner-Kelley-Nachbin's theorem,  $\ell_\infty(\Gamma)$ is an
isometrically injective space.
\end{remark}

\begin{lemma}
Suppose that $X$ is a Banach space. Then
$$(2.2)\;\;\;\;\;\;\;\;\;\;\;\;\;\;\;\text{dens}(X)\geq w^*\text{-dens}(B_{X^*})\geq w^*\text{-dens}(X^*).\;\;\;\;\;\;\;\;\;\;\;\;\;\;\;\;\;\;\;\;\;\;\;\;\;\;\;\;\;\;$$

\end{lemma}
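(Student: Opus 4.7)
The plan is to establish the two inequalities independently.

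For the first inequality $\text{dens}(X)\geq w^*\text{-dens}(B_{X^*})$, I would let $D\subset X$ be a norm-dense subset with $|D|=\kappa=\text{dens}(X)$. The key observation is that on the bounded set $B_{X^*}$ the $w^*$-topology coincides with the topology of pointwise convergence on $D$: for any $x\in X$ and $\eps>0$ there is $d\in D$ with $\|x-d\|<\eps/3$, so for all $x^*,y^*\in B_{X^*}$,
$$|\langle x^*-y^*,x\rangle|\leq |\langle x^*-y^*,d\rangle|+\|x^*-y^*\|\cdot\|x-d\|\leq |\langle x^*-y^*,d\rangle|+2\|x-d\|,$$
which shows that pointwise control on $D$ forces pointwise control on all of $X$. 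Consequently the sets $\{x^*\in B_{X^*}:q_1<\langle x^*,d\rangle<q_2\}$ with $d\in D$ and $q_1,q_2\in\mathbb{Q}$ form a $w^*$-subbase of $B_{X^*}$ of cardinality at most $\kappa$. Hence the $w^*$-weight of $B_{X^*}$ is at most $\kappa$, and since in any topological space the density character does not exceed the weight (pick one point from each nonempty basic open set), one obtains $w^*\text{-dens}(B_{X^*})\leq\kappa$.

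For the second inequality $w^*\text{-dens}(B_{X^*})\geq w^*\text{-dens}(X^*)$, I would start from a $w^*$-dense subset $S\subset B_{X^*}$ with $|S|=\lambda=w^*\text{-dens}(B_{X^*})$ and consider $S'=\bigcup_{n\in\mathbb{N}}nS\subset X^*$. Given $x^*\in X^*$, choose $n\in\mathbb{N}$ with $\|x^*\|\leq n$; then $x^*/n\in B_{X^*}$ is a $w^*$-limit of a net in $S$, and multiplying that net by $n$ (which is $w^*$-continuous) produces a net in $nS\subset S'$ converging $w^*$ to $x^*$. Therefore $S'$ is $w^*$-dense in $X^*$ and has cardinality at most $\lambda\cdot\aleph_0=\lambda$, treating both densities as infinite (the only nontrivial case).

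The only step requiring genuine care is the $\eps/3$-reduction in the first part: without the uniform bound $\|x^*\|\leq 1$ on $B_{X^*}$, norm-density of $D$ in $X$ would not suffice to control the full $w^*$-topology, and one could not pass from $D$ to a subbase for the weak-star topology. Beyond this, the argument is a straightforward combination of standard facts about dual-ball topologies and elementary cardinal arithmetic.
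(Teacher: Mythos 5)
Your proof is correct, but it takes a genuinely different route from the paper's for the first inequality. The paper picks a dense family $(x_\alpha)$ in $S_X$, uses a selection $\phi$ of the subdifferential of the norm to obtain supporting functionals $\phi(x_\alpha)\in S_{X^*}$, and invokes the separation theorem to get $w^*\text{-}\overline{\mathrm{co}}(\phi(x_\alpha))=B_{X^*}$; rational convex combinations of the $\phi(x_\alpha)$ then give an explicit $w^*$-dense subset of $B_{X^*}$ of cardinality at most $\mathrm{dens}(X)$. You instead argue purely topologically: the $w^*$-topology on the bounded set $B_{X^*}$ coincides with pointwise convergence on a dense set $D\subset X$ (your $\eps/3$-estimate is exactly the right justification, and your closing remark correctly identifies boundedness as the essential hypothesis), so the rational-interval sets over $D$ form a subbase of cardinality $\kappa$, whence the $w^*$-weight of $B_{X^*}$ is at most $\kappa$, and density is at most weight. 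This buys you two things the paper's argument does not: you avoid the Hahn--Banach machinery (supporting functionals and separation) entirely, and you in fact prove the stronger statement that the $w^*$-\emph{weight} of $B_{X^*}$ is at most $\mathrm{dens}(X)$. What the paper's approach buys is a concrete norming set realizing the density, which is closer in spirit to the embedding $\Phi:X\to\ell_\infty(\Gamma)$ used in Theorem 2.7. One small step you gloss over: passing from a subbase of cardinality $\kappa$ to a base requires taking finite intersections, which keeps cardinality $\kappa$ since $\kappa$ is infinite — worth a line, though standard. For the second inequality your scaling argument ($X^*=\bigcup_n nB_{X^*}$, with multiplication by $n$ a $w^*$-homeomorphism) is the natural one; the paper simply asserts this inequality at the end of its displayed chain without proof, so here you supply a detail the paper omits.
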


\begin{proof}
It is trivial if $\dim X<\infty$. Assume that $\dim X=\infty$.
  Note dens$X$=dens$S_X$. Let $(x_\alpha)\subset S_{X}$ be a dense subset with card$(x_\alpha)=$des$S_X$, and let $\phi$ be a selection of the subdifferential mapping $\partial\|\cdot\|: X\rightarrow 2^{S_{X^*}}$ of the norm $\|\cdot\|$ defined by $$\partial\|x\|=\{x^*\in X^*: \|x+y\|-\|x\|\geq\langle x^*,y\rangle,\;\;{\rm for\;all\;}y\in X\}.$$ Then
  $$(2.3)\;\;\;\;\;\;\;\;\;\;\;\;\;\;\;\;\;\;\;\;\;\;\;\;\;\;\;\;\;\;w^*{\text-}\overline{{\rm co}}(\phi(x_\alpha))=B_{X^*}.\;\;\;\;\;\;\;\;\;\;\;\;\;\;\;\;\;\;\;\;\;\;\;\;\;\;\;\;\;\;\;\;\;\;\;\;\;\;\;\;\;\;\;\;\;\;\;\;\;\;$$
  Since $\phi(x_\alpha)$ is a norming set of $X$, i.e. $$\|x\|=\sup_\alpha\langle\phi(x_\alpha),x\rangle,\;\text{ for all }x\in X,$$ and since $\dim X=\infty$, $card(\phi(x_\alpha))=\infty.$ Hence,
  $$\text{dens}(X)=\text{dens}( S_X)
  =\text{card}(x_\alpha)\geq\text{card}(\phi(x_\alpha))
  =\text{dens}\{co(\phi(x_\alpha))\}$$
  $$\geq w^*\text{-dens}\{w^*\text{-}\overline {co}(\phi(x_\alpha))\}=w^*\text{-dens}(B_{X^*})\geq w^*\text{-dens}({X^*}).$$
  Equality (2.3) is used to the last equality above.

\end{proof}

We should mention here that the inequalities in Lemma 2.5 can be
proper. For example, let $X=\ell_\infty$. Then $dens(X)=card[0,1]$,
but $B_{X^*}$ is $w^*$-separable. On the other hand, we put an
equivalent norm $|\|\cdot|\|$ on $\ell_\infty$ by
$$|\|x\||=\frac{1}2(\|x\|_\infty+\limsup_n|x(n)|),\;{\rm
for\;all}\;x=(x(n))\in\ell_\infty.$$ Then
  $X^*=\ell_1\oplus c_0^\bot$  is $w^*$-separable, but its closed unit ball $B_{X^*}$ is not $w^*$-separable.

\begin{lemma}
Let $X$ be a Banach space with $\dim X\geq1$, and let $\Omega=dens(X)$. Then $$card(X)=card(c_0(\Omega)).$$

\end{lemma}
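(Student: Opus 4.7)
The plan is to prove $\mathrm{card}(X)=\mathrm{card}(c_0(\Omega))$ by establishing the two inequalities separately. First I would compute $\mathrm{card}(c_0(\Omega))$: every $f\in c_0(\Omega)$ is determined by a countable support $S\subseteq\Omega$ (there are $\Omega^{\aleph_0}$ such subsets) together with a null sequence of real values on $S$ (there are $\mathfrak{c}$ such sequences), so $\mathrm{card}(c_0(\Omega))=\Omega^{\aleph_0}\cdot\mathfrak{c}=\Omega^{\aleph_0}$, where the last equality uses $\Omega^{\aleph_0}\geq 2^{\aleph_0}=\mathfrak{c}$.

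For the upper bound $\mathrm{card}(X)\leq\Omega^{\aleph_0}$, I would fix a dense $D\subseteq X$ with $|D|=\Omega$ (which exists because $\mathrm{dens}(X)=\Omega$). Completeness of $X$ guarantees that every $x\in X$ is the norm limit of some Cauchy sequence in $D$; extending the map ``sequence $\mapsto$ limit'' to all of $D^{\mathbb{N}}$ (sending non-Cauchy sequences to $0$) gives a surjection onto $X$, hence $\mathrm{card}(X)\leq|D|^{\aleph_0}=\Omega^{\aleph_0}$.

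For the lower bound $\mathrm{card}(X)\geq\Omega^{\aleph_0}$, my strategy is to construct a set-theoretic injection $c_0(\Omega)\hookrightarrow X$. Starting from a dense family $\{x_\alpha\}_{\alpha<\Omega}\subseteq S_X$, I would use the Hahn--Banach theorem to pick functionals $x_\alpha^*\in S_{X^*}$ with $\langle x_\alpha^*,x_\alpha\rangle=1$, try to refine them to a nearly biorthogonal family of size $\Omega$, and then send each $f\in c_0(\Omega)$ with enumerated support $\{\alpha_n\}$ to the convergent series $\sum_{n}2^{-n}f(\alpha_n)x_{\alpha_n}$, recovering $f$ by evaluating the $x_\alpha^*$'s. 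The \emph{main obstacle} is precisely this step: the cheap estimates $\mathrm{card}(X)\geq\Omega$ (from the dense subset) and $\mathrm{card}(X)\geq\mathfrak{c}$ (from any one-dimensional subspace) together give only $\max(\Omega,\mathfrak{c})$, which falls strictly short of $\Omega^{\aleph_0}$ exactly when $\Omega>\mathfrak{c}$ and $\mathrm{cf}(\Omega)=\aleph_0$, since then K\"{o}nig's theorem forces $\Omega^{\aleph_0}>\Omega$. Closing this gap requires a genuinely independent $\Omega$-sized family --- presumably via a fundamental biorthogonal system or a careful transfinite selection --- to guarantee injectivity of the displayed map.
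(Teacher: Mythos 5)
Your decomposition is the right one, and two of your three steps are complete and correct: $\mathrm{card}(c_0(\Omega))=\Omega^{\aleph_0}$ by counting countable supports, and $\mathrm{card}(X)\leq\Omega^{\aleph_0}$ by surjecting $D^{\mathbb{N}}$ onto $X$. The genuine gap is the one you yourself flag: the lower bound $\mathrm{card}(X)\geq\Omega^{\aleph_0}$ is never actually proved, and it is the decisive point exactly when $\Omega>\mathfrak{c}$ and $\mathrm{cf}(\Omega)=\aleph_0$. Moreover, the first tool you reach for, a fundamental biorthogonal system of size $\Omega$, is not a ZFC theorem (its existence fails consistently already at density $\aleph_1$, e.g.\ for Kunen's $C(K)$ space under CH), so it cannot be ``presumed''. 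Fortunately nothing that strong is needed, and your second guess (``careful transfinite selection'') does close the gap: by transfinite induction using the Riesz lemma choose $(x_\alpha)_{\alpha<\Omega}\subset S_X$ with $\mathrm{dist}\bigl(x_\alpha,\overline{\mathrm{span}}\{x_\beta:\beta<\alpha\}\bigr)\geq 1/2$, which is possible because the closed span of fewer than $\Omega$ vectors has density $<\Omega$ and is therefore a proper subspace. Then the map $\Omega^{\mathbb{N}}\ni(\alpha_n)\mapsto\sum_n 8^{-n}x_{\alpha_n}$ is injective: if two sequences first differ at $n=k$, the common prefix cancels, $\|x_{\alpha_k}-x_{\beta_k}\|\geq 1/2$ (whichever of the two has the larger index is $1/2$-far from the closed span containing the other), and the tails contribute at most $2\sum_{n>k}8^{-n}=\tfrac{2}{7}\,8^{-k}<\tfrac12\,8^{-k}$. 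Hence $\mathrm{card}(X)\geq\mathrm{card}(\Omega^{\mathbb{N}})=\Omega^{\aleph_0}$, with no functionals and no biorthogonality at all. Note that your displayed map would need such a separation estimate anyway: ``recovering $f$ by evaluating the $x^*_\alpha$'' breaks down because a transfinitely selected family is only lower-triangularly biorthogonal (the functionals need not annihilate vectors with larger indices), so injectivity should be argued metrically, as above.

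It is worth recording how this compares with the paper. The paper's own proof takes a different and in fact defective route: in the nonseparable case it asserts $\Omega=\Omega^{\aleph_0}=\mathrm{card}(X)$, and likewise $\mathrm{card}(c_0(\Omega))=\mathrm{dens}(c_0(\Omega))^{\aleph_0}=\mathrm{dens}(c_0(\Omega))=\Omega$; but the identity $\Omega^{\aleph_0}=\Omega$ is false whenever $\mathrm{cf}(\Omega)=\aleph_0$ (K\"onig's theorem), e.g.\ for $X=c_0(\aleph_\omega)$ --- precisely the case you isolate as the obstacle. The lemma itself survives because both cardinalities equal $\Omega^{\aleph_0}$, which is exactly what your decomposition computes; so once the lower bound is supplied as above, your argument is not only complete but is the correct repair of the paper's own proof.
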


\begin{proof}
If $X$ is separable, then $dens(X)=\aleph_0$ and $$card(X)=\Omega^{\aleph_0}={\aleph_0}^{\aleph_0}=
\aleph^{\aleph_0}=card(c_0({\aleph_0}))=card(c_0(\Omega)).$$
If $X$ is not separable, then $$\aleph\leq\Omega=\Omega^{\aleph_0}=card(X).$$ Let $e_\omega$
(for all $\omega\in\Omega$)  be the standard unit vectors in $c_0(\Omega)$.
$$card(c_0(\Omega))=dens(c_0(\Omega))^{\aleph_0}=dens(c_0(\Omega))$$
$$=\text{dens(span}(e_\omega)_{\omega\in\Omega})=\sum_{n=1}^\infty(\aleph_0\cdot\Omega)^n=\Omega.$$

\end{proof}

\begin{theorem}
Suppose that  $X$ is a universal left-stability  spaces. Then
there is an injective conjugate space $V$ such that $X\subset
V\subset X^{**}$.
\end{theorem}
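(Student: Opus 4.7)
The plan is to combine universal left-stability with Lemma~2.1 to extract a uniform cardinality-injectivity property for $X$, then to use a canonical embedding of $X^{**}$ into some $\ell_\infty(\Gamma)$ to carve out a dual injective $V$ sandwiched between $X$ and $X^{**}$.

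First I would establish that universal left-stability forces $X$ to be $\lambda$-complemented, for some $\lambda$ depending only on $X$, in every Banach space $Y$ which contains $X$ and satisfies $\text{card}(X)=\text{card}(Y)$. This is immediate from Lemma~2.1 combined with the definition of universal left-stability: for each $\varepsilon>0$ Lemma~2.1 produces a standard $\varepsilon$-isometry $f:X\to Y$ with $L(f)=Y$; universal left-stability supplies $T:Y\to X$ with $\|Tf(x)-x\|\le\gamma\varepsilon$; and the ``complemented whenever $f$ is stable'' clause of Lemma~2.1 converts this into complementation of $X$ in $Y$, with projection constant controlled by $\gamma$.

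Next I would produce the ambient injective conjugate space. Using Lemma~2.5, pick a $w^*$-dense norming subset $\Gamma\subset B_{X^*}$ of cardinality at most $\text{dens}(X)$; the evaluation map $x^{**}\mapsto(\langle x^{**},\gamma\rangle)_{\gamma\in\Gamma}$ gives an isometric embedding $X^{**}\hookrightarrow\ell_\infty(\Gamma)$ which restricts to the canonical inclusion $X\hookrightarrow\ell_\infty(\Gamma)$. By Remark~2.4, $\ell_\infty(\Gamma)=c_0(\Gamma)^{**}$ is simultaneously a conjugate space and isometrically injective, so we have a chain $X\subset X^{**}\subset\ell_\infty(\Gamma)$ in which both ends have the structural features we want to transplant to $V$.

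With these ingredients, I would construct $V$ as a subspace of $X^{**}$ arising as a $w^*$-closed ``completion'' of $X$ along a transfinite tower $X=V_0\subset V_1\subset\cdots$ of subspaces of $X^{**}$. The tower is built so that each intermediate piece has density $\text{dens}(X)$, so that by Lemma~2.6 its cardinality equals $\text{card}(X)$; step one then furnishes uniformly-bounded projections along the chain. Taking the appropriate $w^*$-closure in $X^{**}$ yields $V$ with $X\subset V\subset X^{**}$, and $w^*$-closedness makes $V$ a conjugate space, isometric to $(X^{*}/V_{\perp})^{*}$. The complementation data assembled along the tower, together with injectivity of $\ell_\infty(\Gamma)$, is what makes $V$ itself into a $P_\lambda$-space, hence injective by Proposition~2.2.

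The main obstacle will be orchestrating this recursion so that the two structural requirements on $V$---being $w^*$-closed in $X^{**}$ (for conjugacy) and being injective---hold simultaneously. Neither of the naive choices $V=X$ or $V=X^{**}$ works in general: $X$ itself need only be cardinality-injective rather than injective, and $\text{card}(X^{**})$ may strictly exceed $\text{card}(X)$, preventing a direct application of step one to the pair $(X,X^{**})$. The cardinality arithmetic supplied by Lemmas~2.5 and~2.6 is precisely what permits an intermediate tower of subspaces of the correct size, bridging these two extremes and yielding the desired $V$.
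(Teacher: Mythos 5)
Your construction breaks at two load-bearing points, both at the stage where $V$ is supposed to appear. First, evaluation at a $w^*$-dense set $\Gamma\subset B_{X^*}$ is isometric on $X$ (this is the paper's map $\Phi$) but not on $X^{**}$: a $w^*$-dense subset of $B_{X^*}$ is $1$-norming for $X$, yet it need not even separate the points of $X^{**}$. Take $X=\ell_\infty$; as the paper notes right after Lemma 2.5, $B_{X^*}$ is $w^*$-separable, so $\Gamma$ may be chosen countable, and then $\mathrm{card}(\ell_\infty(\Gamma))=\mathfrak{c}$ while $\mathrm{card}(X^{**})>\mathfrak{c}$ (the Dirac measures on $\beta\mathbb{N}$ span an isometric copy of $\ell_1(2^{\mathfrak c})$ inside $X^*$, so $X^{**}$ maps onto $\ell_\infty(2^{\mathfrak c})$ by restriction); hence no injection $X^{**}\to\ell_\infty(\Gamma)$ exists at all, and your chain $X\subset X^{**}\subset\ell_\infty(\Gamma)$ is unavailable. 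The paper instead keeps $X^{**}$ upstairs, identifying it with the $w^*$-closure of $\Phi(X)$ in $\ell_\infty(\Gamma)^{**}$, not in $\ell_\infty(\Gamma)$. Second, and fatally for the tower: you propose to obtain $V$ as a $w^*$-closed subspace of $X^{**}$ containing $X$, with conjugacy from $V=(X^*/V_\perp)^*$. But the relevant topology there is $\sigma(X^{**},X^*)$, and by Goldstine's theorem $X$ is already $\sigma(X^{**},X^*)$-dense in $X^{**}$; so the only $w^*$-closed subspace of $X^{**}$ containing $X$ is $X^{**}$ itself --- precisely the extreme you correctly excluded. The paper's $V$ is $w^*$-closed with respect to a \emph{different} duality, namely $\sigma(\ell_\infty(\Gamma),\ell_1(\Gamma))$, in which the closure of $\Phi(X)$ is in general proper; the predual making $V$ a conjugate space is $\ell_1(\Gamma)/V_\perp$, not a quotient of $X^*$.

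Even granting some intermediate $V$, your injectivity step is a placeholder: nothing in the proposal converts ``$X$ is complemented in same-cardinality superspaces'' into ``$V$ is complemented in $\ell_\infty(\Gamma)$,'' and the tower would need projections of uniformly bounded norm, whereas the definition of universal left-stability only provides a $\gamma$ depending on the target space $Y$, so uniformity along a transfinite tower is not granted (your reading of Lemma 2.1 also assumes norm control that its statement does not assert). The paper sidesteps all of this with a \emph{single} application of stability: set $Z=\overline{\mathrm{span}}(X\cup c_0(\Gamma))$, which satisfies $\mathrm{card}(Z)=\mathrm{card}(X)$ by Lemmas 2.5 and 2.6 --- this is where the cardinality arithmetic actually enters, not in sizing a tower --- choose a bijection $g:X\to B_Z$ with $g(0)=0$, and apply stability to the explicit $\varepsilon$-isometry $f(x)=x+\tfrac{\varepsilon}{2}g(x)$, for which $L(f)=Z$. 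The resulting $T:Z\to X$ satisfies $\|Tx-x\|\le(\gamma+\|T\|/2)\varepsilon$ for \emph{all} $x\in X$, which forces $T|_X=I_X$ by homogeneity; then $S=PT^{**}$, where $P:\ell_\infty(\Gamma)^{**}\to\ell_\infty(\Gamma)$ is the canonical projection, is $w^*$-to-$w^*$ continuous, fixes $X$, and maps $\ell_\infty(\Gamma)$ into $V$ because $c_0(\Gamma)\subset Z$ makes $B_Z$ $w^*$-dense in $B_{\ell_\infty(\Gamma)}$ and $V$ is $w^*$-closed. This exhibits $V$ as a complemented $w^*$-closed subspace of the injective space $\ell_\infty(\Gamma)$, which is the whole theorem. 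Your step one (stability plus Lemma 2.1 forcing complementation) is sound in spirit and is how the paper argues elsewhere (the necessity half of Theorem 3.3), but in the proof of this theorem it is used only through the single space $Z$.
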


\begin{proof}
Let $(x^*_\gamma)\subset B_{X^*}$ be a $w^*$-dense subset of
$B_{X^{*}}$ with ${\rm card}(x^*_\gamma)=w^*\text{-dens}
B_{X^*}\equiv\Gamma$, where $\Gamma$ denotes the cardinality of the
$w^*$-density of $B_{X^*}$.   By Lemma 2.5, ${\rm
dens}(X)\geq\Gamma$. Let $\Phi: X\rightarrow\ell_\infty(\Gamma)$ be
defined by
$$\Phi(x)=(\langle x^*_\gamma,x\rangle)_{\gamma\in\Gamma},\;\;x\in X.$$
Clearly, $\Phi$ is a linear isometry. Now, there are two
$w^*$-topologies on $X$: One is the $w^*$-topology of $X^{**}$
restricted to $X\subset X^{**}$, which we denote by $\tau_{w^*}$;
the other is, acting as a subspace of $\ell_\infty(\Gamma)$, the
$w^*$-topology of $\ell_\infty(\Gamma)=\ell_1(\Gamma)^*$ restricted
to $\Phi(X)$, which we denote by $\tau_{w^*,\infty}$. Let
$V=\tau_{w^*,\infty}\text{-}\overline{\Phi(X)}$, the
$\tau_{w^*,\infty}$-closure of $\Phi(X)$ in $\ell_\infty(\Gamma)$.
Without loss of generality, we assume $\Phi(X)=X$. Note $X^{**}=$
the $w^*$-closure of $X$ in
$\ell_\infty(\Gamma)^{**}=\ell_\infty\bigoplus
(c_0(\Gamma)^\bot)^*$. It is easy to get $X\subset V\subset X^{**}$.

Let $Z=\overline{\text{span}}(X\cup c_0(\Gamma))$. By Lemma 2.5 and
Lemma 2.6, card$(X)=$card$(Z).$

Let $g: X\rightarrow B_Z$ be a bijection with $g(0)=0$, and let $f: X\rightarrow Z$ be defined by
$$f(x)=x+\eps/2\cdot g(x),\;\;{\rm for\;all}\;x\in X.$$ Clearly, $f$ is a standard $\eps$-isometry.
Note $\lim_{n\rightarrow\infty}f(nx)/n=x,\;\;{\rm for\;all\;}x\in X.$ We see that $L(f)=Z$.
By universal left-stability assumption, there exist a bounded linear operator $T:Z\rightarrow X $ and a positive number $\gamma>0$ such that
$$\|Tf(x)-x\|\leq\gamma\eps,\;\;{\rm for\;all\;}x\in X.$$
It is easy to see that $T$ is surjective. Thus its biconjugate
operator $T^{**}:Z^{**}\rightarrow X^{**}$ is surjective and
$w^*$-to-$w^*$ continuous with $\|T^{**}\|=\|T\|$ and with
$T^{**}|_Z=T$. Since $Z$ isometrically contains $c_0(\Gamma)$,
$Z^{**}$ (isometrically) contains $\ell_\infty(\Gamma)$. Note that
$Z$ acting as a subspace of $\ell_\infty(\Gamma)$, its $w^*$-closure
in $\ell_\infty(\Gamma)^{**}=\ell_\infty(\Gamma)\bigoplus
(c_0(\Gamma)^\bot)^*$ is just $Z^{**}$. Let
$P:\ell_\infty(\Gamma)^{**}\rightarrow\ell_\infty(\Gamma)$ be the
natural projection. Clearly, $P$ is $w^*$-to-$w^*$ continuous with
$\|P\|=1$.  On the other hand, since $Z$ contains $c_0(\Gamma)$, it
is a $w^*$-dense subspace of $\ell_\infty(\Gamma)$. Therefore, its
$w^*$-closure in $\ell_\infty(\Gamma)=\ell_1(\Gamma)^*$ is just the
whole space $\ell_\infty(\Gamma)$.
 We claim that $PT^{**}\ell_\infty(\Gamma)=V.$

Let $S=PT^{**}$. Then $S: Z^{**}\rightarrow\ell_\infty(\Gamma)$ is
$w^*$-to-$w^*$ continuous. It suffices to show
$S(\ell_\infty(\Gamma))\subset V.$
 Given $z\in\ell_\infty(\Gamma)$,
since $B_Z$($B_{c_0(\Gamma)}$, resp.) is $w^*$-dense in
$B_{Z^{**}}$($B_{\ell_\infty(\Gamma)}$,resp.), there is a net
$(z_\alpha)\subset\|z\|B_Z$ such that $z_\alpha\rightarrow z$ in
both the $w^*$ topologies of $\ell_\infty(\Gamma)$ and
$\ell_\infty(\Gamma)^{**}.$ Thus, $S(z_\alpha)\rightarrow
S(z)\in\ell_\infty(\Gamma),$ in the $w^*$-topology of
$\ell_\infty(\Gamma)$. Since
$S(z_\alpha)=PT^{**}(z_\alpha)=PT(z_\alpha)=T(z_\alpha)\in X\subset
V$, and since $V$ is a $w^*$-closed subspace of
$\ell_\infty(\Gamma),$ we see $S(z)\in V$.

 We have proven that $V$ is a complemented $w^*$-closed subspace of $\ell_\infty(\Gamma)$,
 which in turn entails that $V$ is a conjugate injective space.
\end{proof}

\begin{corollary}
Suppose that  $X$ is a universally left-stable conjugate Banach spaces.     Then it is an injective space.
\end{corollary}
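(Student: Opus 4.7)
The plan is to deduce the corollary directly from Theorem 2.7 together with the classical fact that a dual space is always $1$-complemented in its bidual.

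First I would apply Theorem 2.7 to obtain an injective conjugate Banach space $V$ with $X\subset V\subset X^{**}$, where the inclusions are isometric and $X$ is canonically identified with its image under the evaluation map $j_X:X\to X^{**}$. Since by hypothesis $X$ is itself a dual space, say $X=Z^*$ for some Banach space $Z$, there is a standard norm-one projection $\pi:X^{**}=Z^{***}\to Z^*=X$ defined by $\pi(x^{**})=x^{**}\circ j_Z$, i.e. the restriction of $x^{**}\in Z^{***}$ to the canonical image of $Z$ inside $Z^{**}=X^*$. This projection satisfies $\pi\circ j_X=\mathrm{id}_X$, so $X$ is $1$-complemented in $X^{**}$.

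Next I would restrict $\pi$ to $V$. Since $X\subset V\subset X^{**}$, the map $P:=\pi|_V:V\to X$ is a bounded linear operator with $\|P\|\leq 1$ and $P|_X=\mathrm{id}_X$, so $P$ is a projection of $V$ onto $X$. Thus $X$ is a complemented subspace of the injective space $V$.

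Finally I would invoke the standard fact that a complemented subspace of an injective space is injective. Concretely, given any Banach space $W$, a closed subspace $E\subset W$, and a bounded operator $T:E\to X$, compose with the inclusion $\iota:X\hookrightarrow V$ to obtain $\iota\circ T:E\to V$; by injectivity of $V$ this extends to some $\widetilde{T}:W\to V$, and then $P\circ\widetilde{T}:W\to X$ is the desired extension of $T$, which shows $X$ is injective. There is no real obstacle here, because all the work was carried out in Theorem 2.7; the corollary is just the observation that the extra dual-space hypothesis upgrades ``sandwiched between $X$ and $X^{**}$'' to ``complemented in an injective space,'' which is enough to transfer injectivity back to $X$.
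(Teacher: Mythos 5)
Your proof is correct, but it takes a genuinely different route from the paper's. The paper does not use Theorem 2.7 as a black box; it re-enters the construction inside that theorem's proof: writing $X=W^*$, it first shows via the separation theorem that $\mathrm{dens}(W)$ equals the $w^*$-density character $\Gamma$ of $B_{X^*}$, then chooses the norming family defining the embedding $\Phi:X\rightarrow\ell_\infty(\Gamma)$ from a dense subset $(w_\gamma)_{\gamma\in\Gamma}$ of $B_W$ in the predual. With this choice $\Phi$ is a $w^*$-to-$w^*$ homeomorphic isometry onto its image, so $\Phi(X)$ is $w^*$-closed in $\ell_\infty(\Gamma)$ and coincides with the space $V$ of Theorem 2.7; since that theorem's proof shows $V$ is complemented in $\ell_\infty(\Gamma)$, the paper concludes that $X$ itself is isometric to a complemented $w^*$-closed subspace of $\ell_\infty(\Gamma)$ --- the stronger structural statement advertised in the abstract --- and in particular injective. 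You instead keep Theorem 2.7 as stated and combine it with the Dixmier projection $\pi=j_Z^*:Z^{***}\rightarrow Z^*$ satisfying $\pi\circ j_X=\mathrm{id}_X$, restrict it to $V$ to exhibit $X$ as a norm-one complemented subspace of the injective space $V$, and then transfer injectivity by the standard extend-then-project argument; all three steps (the Dixmier projection, the restriction $P=\pi|_V$ being a projection onto $X$, and injectivity passing to complemented subspaces) are sound. Your route is more modular and elementary, and it has the virtue of depending only on the statement of Theorem 2.7 rather than on a particular choice made inside its proof; what it gives up is the finer conclusion that $X$ embeds isometrically as a $w^*$-closed complemented subspace of $\ell_\infty(\Gamma)$, which only the paper's choice-of-predual-functionals argument secures (injectivity alone gives a complemented embedding via the $P_\lambda$ property, but not $w^*$-closedness of the image). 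One point worth making explicit in your write-up: your argument requires the inclusions $X\subset V\subset X^{**}$ of Theorem 2.7 to be compatible with the canonical evaluation embedding $X\hookrightarrow X^{**}$, which is indeed what the proof of Theorem 2.7 provides once the $w^*$-closure of $\Phi(X)$ in $\ell_\infty(\Gamma)^{**}$ is identified with $X^{**}$.
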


\begin{proof}
Suppose that $W$ is a Banach space with
$W^*=X$. Let $\Gamma$ be the cardinality of the $w^*$-density of
$B_{X^*}=B_{W^{**}}$ and let $\Omega=\text{dens}(W)=\text{dens}(B_W).$  Then, by the separation theorem of convex sets, 
$\Omega=\Gamma$. Indeed, on one hand, any dense subset of $B_W$ is necessarily $w^*$-dense in $B_{W^{**}}$; on the other hand, for any subset $S$  of $B_W$ with cardinality less than $\Omega$ we see that $U\equiv\overline{{\rm span}}S$ is a closed proper subspace of $W$. Therefore, by the separation theorem, there exists non-zero functional $\phi\in W^*=X$ such that $\phi(U)=\{0\}$. Hence, $U$ is not $w^*$-dense in $W^{**}=X^*$, which entails that $S\subset B_W$ is not $w^*$-dense in $B_{X^*}$.  Assume that
$(w_\gamma)_{\gamma\in\Gamma}\subset B_{W}$ is a dense subset of
$B_{W}$. Let $\Phi:X\rightarrow\ell_\infty(\Gamma)$ be defined by
$$\Phi(x)=(\langle w_\gamma,x\rangle)_{\gamma\in\Gamma}.$$  Clearly,
$\Phi$ is a $w^*$-to-$w^*$
continuous linear isometry. Conversely, since $(w_\gamma)$ is dense in $B_W$,
$\Phi^{-1}$ is also $w^*$-to-$w^*$ continuous. Thus, $\Phi(X)$ is a
$w^*$-closed space of $\ell_\infty(\Gamma)$. According to  definition of the space $V$ in Theorem 2.7, $\Phi(X)=V$. 
So that $\Phi(X)$ is an injective space, hence, $X$ is also injective.

\end{proof}

\section{A characterization of universal-left-stability spaces}

\begin{definition}

A Banach space $X$ is said to be cardinality injective, if there exists a constant $\lambda\geq 0$ such that for every
Banach space $Y$ isometrically containing $X$ and  with the same
cardinality as $X$, i.e., $\text{card}(Z)=\text{card}(X)$,
 there is a projection $P:Y\rightarrow X$ with $\|P\|\leq\lambda$.
 \end{definition}
Using the same procedure of Day \cite{day}, we can show that $X$ is  cardinality injective if and only if it has the following extension property:
Every bounded linear operator from a closed subspace of a Banach space $Y$ with $\text{card}(Y)\leq\text{card}(X)$ into $X$ can be extended to be a bounded operator on the whole space. Thus, we have the following property.
\begin{proposition}
A Banach space isomorphic to a cardinality injective space is again a cardinality injective space.
\end{proposition}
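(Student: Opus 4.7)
My plan is to proceed via the extension-property characterization of cardinality injectivity stated in the remark preceding the proposition, rather than via the projection definition itself. The reason is that an isometric embedding of $X_2$ into a larger space $Y$ does not, in general, induce an isometric embedding of $X_1$, so the projection formulation does not transfer cleanly under mere linear isomorphism; by contrast, the extension property is insensitive to such rigid hypotheses, and intertwines perfectly with linear isomorphisms.

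So suppose $X_1$ is cardinality injective and $S\colon X_1\to X_2$ is a linear isomorphism. First I would observe that since $S$ is in particular a bijection of sets, $\text{card}(X_1)=\text{card}(X_2)$. Hence the cardinality bound $\text{card}(Y)\leq\text{card}(X_2)$ in the extension property for $X_2$ is the same as $\text{card}(Y)\leq\text{card}(X_1)$, which is the bound available from the extension property of $X_1$.

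The main step: given an arbitrary Banach space $Y$ with $\text{card}(Y)\leq\text{card}(X_2)$, a closed subspace $Z\subset Y$, and a bounded linear operator $T\colon Z\to X_2$, form $S^{-1}T\colon Z\to X_1$, which is bounded linear with $\|S^{-1}T\|\leq\|S^{-1}\|\cdot\|T\|$. By the extension property of $X_1$ (which is equivalent to its cardinality injectivity by the remark, after Day's procedure), there is a bounded linear extension $\widetilde{T}\colon Y\to X_1$ of $S^{-1}T$. Then $S\widetilde{T}\colon Y\to X_2$ is a bounded linear extension of $T$, since for $z\in Z$ we have $S\widetilde{T}(z)=SS^{-1}T(z)=T(z)$. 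This proves that $X_2$ has the extension property, hence is cardinality injective.

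I do not expect any genuine obstacle in this proof: everything is routine functorial manipulation, and the only conceptual point is to rely on the equivalent extension-property formulation so that the argument is not derailed by the fact that $S$ may fail to be isometric. If one wanted to track quantitative constants, the cardinality-injective constant $\lambda'$ for $X_2$ obtained in this way would satisfy $\lambda'\leq\|S\|\cdot\|S^{-1}\|\cdot\lambda$, where $\lambda$ is the constant for $X_1$, but this is not needed for the qualitative statement of the proposition.
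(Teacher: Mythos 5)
Your proof is correct and follows exactly the route the paper intends: the paper derives the proposition immediately from the extension-property characterization stated in the preceding remark (via Day's procedure), which is precisely the characterization you invoke. Your write-up simply makes explicit the routine details (cardinality preserved under the bijection $S$, extension of $S^{-1}T$ followed by composition with $S$) that the paper leaves implicit, including the correct constant estimate $\lambda'\leq\|S\|\,\|S^{-1}\|\,\lambda$.
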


The following theorem says that a Banach space is universally left-stable if and only if it is cardinally injective.

\begin{theorem} Let $X$ be a Banach space. Then a sufficient and necessary condition
for that there is $\gamma>0$ such that for every Banach space $Y$,
every nonnegative number $\eps$ and for every standard
$\eps$-isometry $f:X\rightarrow Y$ there exists  a bounded linear
operator $T: L(f)\rightarrow X$  satisfying
 $$\|Tf(x)-x\|\leq\gamma\eps, \;\;{\rm for\;all}\; x\in X$$ is that $X$ is a cardinality injective space.

\end{theorem}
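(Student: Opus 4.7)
I will prove the two implications separately. Necessity is the essentially new content and uses Lemma 2.1; sufficiency adapts the \cite{cheng1} proof that every injective space is universally left-stable, with the observation that all ambient spaces involved have cardinality at most $\text{card}(X)$, so cardinality injectivity of $X$ suffices.

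For necessity, assume universal left-stability with uniform constant $\gamma$, and let $Y$ isometrically contain $X$ with $\text{card}(Y)=\text{card}(X)$. By Lemma 2.1, whose proof constructs $f(x)=x+(\eps/2)g(x)$ for a bijection $g:X\to B_Y$ with $g(0)=0$ (as used in Theorem 2.7), for each $\eps>0$ there is a standard $\eps$-isometry $f:X\to Y$ with $L(f)=Y$. By hypothesis there is a bounded linear $T:Y\to X$ with $\|Tf(x)-x\|\leq\gamma\eps$ for all $x\in X$. Replacing $x$ by $tx$, dividing by $t$, and letting $t\to\infty$ (licit since $g(tx)\in B_Y$) yields $Tx=x$ on $X$, so $T$ is a projection onto $X$. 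Every $y\in B_Y$ equals $g(x)=(2/\eps)(f(x)-x)$ for a unique $x$, giving $Ty=(2/\eps)(Tf(x)-x)$ and $\|Ty\|\leq 2\gamma$. Thus $\|T\|\leq 2\gamma$, so $X$ is cardinality injective with constant at most $2\gamma$.

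For sufficiency, let $\lambda$ be a cardinality-injectivity constant for $X$ and $f:X\to Y$ a standard $\eps$-isometry. Set $Z=L(f)$; a cardinal arithmetic computation in the spirit of Lemma 2.6 gives $\text{card}(Z)\leq\text{card}(X)$. By the theorem of \cite{cheng1} recalled in the introduction, there is a linear isometry $U:X^{**}\to Z^{**}$; restricting gives an isometric linear embedding $V=U|_X:X\to Z^{**}$. I form the algebraic sum $W=Z+V(X)\subset Z^{**}$, of cardinality at most $\text{card}(X)$, and invoke the extension-property form of cardinality injectivity (noted after Definition 3.1) to extend $V^{-1}:V(X)\to X$ to a bounded linear $R:W\to X$ with $\|R\|\leq\lambda$. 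Setting $T:=R|_Z$ gives $Tf(x)-x=R(f(x)-V(x))$.

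The main obstacle is to bound $\|R(f(x)-V(x))\|$ by a uniform multiple of $\eps$. In general $\|f(x)-V(x)\|_{Z^{**}}$ need not itself be $O(\eps)$ (the Figiel-type example $f(x)=(x,\sin x)$ already shows this at $\eps=0$), so a more refined choice of $R$ (or $V$) is needed, one that annihilates the nonlinear residue $f(x)-V(x)$ to order $\eps$. I would construct such an $R$ by applying Theorem 1.7 to a $w^*$-dense family of functionals in $B_{X^*}$ and then linearizing via the cardinality-injective extension property, effectively carrying out the \cite{cheng1} injective-space argument step by step and checking at each stage that the relevant subspaces have cardinality at most $\text{card}(X)$. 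This verification is what lets cardinality injectivity close the argument in place of full injectivity, with the final $\gamma$ depending only on $\lambda$.
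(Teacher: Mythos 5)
Your necessity argument is correct and is actually tighter than the paper's. The paper proves necessity by contradiction through Lemma 2.1 (if $X$ is not cardinality injective, take an uncomplemented embedding and invoke the lemma's unstable $\eps$-isometry), whereas you unwind the lemma's construction $f(x)=x+(\eps/2)g(x)$ directly: the scaling limit gives $Tx=x$, and $Ty=(2/\eps)(Tf(x)-x)$ for $y=g(x)\in B_Y$ gives $\|T\|\leq 2\gamma$. This yields the \emph{uniform} projection constant $2\gamma$ that Definition 3.1 demands, a quantitative point the paper's contradiction argument glosses over. That half stands.

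The sufficiency half, however, has a genuine gap, and you half-acknowledge it. Your concrete construction --- take the linear isometry $U:X^{**}\to Z^{**}$ from \cite{cheng1}, restrict to $V=U|_X$, extend $V^{-1}$ from $V(X)$ to $W=Z+V(X)$ by cardinality injectivity, and set $T=R|_Z$ --- fails for exactly the reason you name: $\|f(x)-V(x)\|$ need not be $O(\eps)$, and nothing about an arbitrary norm-$\lambda$ extension $R$ forces it to annihilate that residue; bounding $\|R(f(x)-V(x))\|$ is the entire problem, not a detail. Your closing sentence ("I would construct such an $R$ by applying Theorem 1.7 \dots carrying out the \cite{cheng1} argument step by step") is a program, not a proof, and it omits the one idea that makes the theorem work. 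The paper's route is: identify $X$ with $J_X(X)\subset\ell_\infty(\Gamma)$ where $\Gamma=B_{X^*}$; for each $\gamma\in\Gamma$ apply Theorem 1.7 to $x^*=\delta_\gamma|_X$ to obtain $\phi_\gamma\in Y^*$ with $\|\phi_\gamma\|\leq 1$ and $|\phi_\gamma(f(x))-\delta_\gamma(x)|\leq 4\eps$; then $S(y)=(\phi_\gamma(y))_{\gamma\in\Gamma}$ defines a norm-one operator $S:Y\to\ell_\infty(\Gamma)$ satisfying $\|Sf(x)-x\|_\infty\leq 4\eps$ for all $x\in X$. That is, $S$ itself is already the approximate left inverse of $f$, with values in a space isometrically containing $X$ --- no comparison of $f$ with a linear isometry $V$ is ever needed, which is precisely how the obstacle you identified is bypassed. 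Cardinality injectivity then enters exactly once: $Z=\mathrm{span}[S(f(X))\cup X]$ satisfies $\mathrm{card}(Z)=\mathrm{card}(X)$, so there is a projection $P:\overline{Z}\to X$ with $\|P\|\leq\alpha$, and $T=PS|_{L(f)}$ gives $\|Tf(x)-x\|\leq 4\alpha\eps$, so $\gamma=4\alpha$ depends only on $X$. Without the construction of $S$, your sufficiency direction is not proved.
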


\begin{proof}  Sufficiency. Assume that $X$ is a cardinality injective Banach space.
Then there exists $\alpha>0$ such that for every Banach space $Z$
isometrically containing $X$ there is a projection $P:Z\rightarrow
X$ such that $\|P\|\leq\alpha$. We can assume that $X$ is a closed
subspace of $\ell_\infty(\Gamma)$ for some set $\Gamma$; otherwise,
we can identify $X$ for $J_X(X)$ as a closed subspace of
$\ell_\infty(\Gamma)$, where $\Gamma$ denotes the closed ball
$B_{X^*}$ of $X^*$. Given any $\beta\in \Gamma$, let $\delta_\beta
\in\ell_\infty(\Gamma)^*$ be defined for $x=(x(\gamma))_{\gamma \in
\Gamma}\in \ell_\infty(\Gamma)$ by $\delta_\beta (x)=x(\beta).$

Assume that $f : X\rightarrow Y$ be an $\eps$-isometry with $f(0)=0$.
For every $x^* \in X^*$, 
by Theorem 1.7, there is
$\phi\in Y^*$ with $\|\phi\|=\|x^*\|$ such that
 $$(3.2)\;\;\;\;\;\;\;\;\;\;\;\;\;\;\;\;|\langle\phi,f(x)\rangle-\langle x^*,x\rangle|
\leq4\eps\|x^*\|,\; {\rm for \;all}\; x\in
X.\;\;\;\;\;\;\;\;\;\;\;\;\;\;\;\;\;\;\;\;\;\;\;\;\;\;\;\;$$ In
particular, letting $x^*=\delta_\gamma$ in (3.2) for every fixed
$\gamma\in\Gamma$, we obtain a linear functional $\phi_\gamma\in
Y^*$ satisfying (3.2) with
$\|\phi_\gamma\|=\|\delta_\gamma\|_X\leq1.$ Therefore,
$$S(y)=(\phi_\gamma(y))_{\gamma\in\Gamma},\;{\rm for\; every}\; y\in
Y$$ defines a linear operator $S:Y\rightarrow \ell_\infty(\Gamma)$
with $\|S\|\leq1$. Let $Z={\rm span}[S(f(X))\cup X]$. Then
$\overline{Z}\supset X$ and ${\rm card}(Z)={\rm card}(X)$. Since $X$
is a cardinality injective space, there is a projection $P:
\overline{Z}\rightarrow X$ with $\|P\|\leq\alpha<\infty.$

Let $T(y)=P(S(y))$, for all $y\in L(f)$, and note $P|_X=I_X$, the
identity from $X$ to itself. Then $\|T\|\leq\|P\|\|S\|\leq\alpha$
and for all $x\in X$,

$$\|Tf(x)-x\|=\|P(\phi_{\gamma}(f(x)))_{\gamma \in
\Gamma}-(\delta_{\gamma}(x))_{\gamma \in \Gamma}\|$$

$$\;\;\;\;\;\;\;\;\;\;\;=\|P(\phi_{\gamma}(f(x)))_{\gamma \in
\Gamma}-P((\delta_{\gamma}(x))_{\gamma \in \Gamma})\|\;\;\;\;\;\;\;\;\;\;\;\;\;$$

 $$\;\;\;\;\;\;\;\;\;\;\;\;\leq\|P\|\cdot\|(\phi_{\gamma}(f(x)))_{\gamma \in
\Gamma}-(\delta_\gamma(x))_{\gamma \in \Gamma}\|_\infty\leq 4 \alpha\eps.$$
We finish the proof of the sufficiency  by taking $\gamma=4\alpha$.

Necessity. Suppose, to the contrary, that $X$ is not a cardinality
injective space. Then, there is a Banach space $Y$ containing $X$
and a dense subspace $Z$ of $Y$ such that ${\rm card}(Z)={\rm
card}(X)$, but $X$ is not complemented in $Y$. By Lemma 2.1, there is a 
standard $\eps$-isometry $f:X\rightarrow Y$ which is not stable.

\end{proof}

\begin{corollary} The universally-left-stability of Banach spaces is invariant under linear
isomorphism.
\end{corollary}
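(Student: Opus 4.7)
The plan is to assemble Corollary 3.4 from two results already established in the paper: Theorem 3.3, which gives the characterization ``universally left-stable $\iff$ cardinality injective'', and Proposition 3.2, which asserts that cardinality injectivity is preserved under linear isomorphism. Once these are in hand, the corollary follows by a one-line chase through the equivalences.

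More concretely, suppose $X$ is universally left-stable and $Y$ is a Banach space linearly isomorphic to $X$. First I would invoke the necessity direction of Theorem 3.3 to conclude that $X$ is a cardinality injective space. Then I would apply Proposition 3.2 to transfer cardinality injectivity across the linear isomorphism $X \simeq Y$, yielding that $Y$ is also cardinality injective. Finally, the sufficiency direction of Theorem 3.3 shows that $Y$ is universally left-stable, with the constant $\gamma = 4\alpha$ coming from the cardinality injectivity constant $\alpha$ of $Y$. Reversing the roles of $X$ and $Y$ (since a linear isomorphism has a bounded inverse) gives the equivalence.

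There is really no obstacle here: the corollary is essentially a formal consequence of the main characterization theorem. The only thing worth noting is that, \emph{a priori}, universal left-stability is defined via a metric condition on $\varepsilon$-isometries and so could have been sensitive to the choice of equivalent norm on $X$; what makes the corollary nontrivial in content (though trivial in proof, given Theorem 3.3) is precisely that the characterization as a cardinality injective space is a purely isomorphic property. I would therefore keep the proof to a few lines, citing Theorem 3.3 and Proposition 3.2, and emphasize in a closing remark that the stability constant $\gamma$ is not preserved under isomorphism, only the \emph{existence} of some finite $\gamma$ is.
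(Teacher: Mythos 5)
Your proposal is correct and follows exactly the paper's own route: the authors also prove Corollary 3.4 by simply combining Theorem 3.3 (universal left-stability $\iff$ cardinality injectivity) with Proposition 3.2 (cardinality injectivity is an isomorphic invariant). Your added observations about the non-preservation of the constant $\gamma$ and why the corollary is nontrivial in content are sound elaborations of the same one-line argument.
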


\begin{proof}
It suffices to note Theorem 3.3 and Proposition 3.2.
\end{proof}

\begin{corollary}
Let $X$ be a  Banach space. If  for every Banach space $Y$, every
nonnegative number $\eps$ and for every standard $\eps$-isometry
$f:X\rightarrow Y$ there exist $\infty>\gamma\geq0$ and a bounded
linear operator $T: L(f)\rightarrow X$  satisfying
 $$\|Tf(x)-x\|\leq\gamma\eps, \;\;{\rm for\;all}\; x\in X,$$ then $X$ is universally left-stable,
i.e. the positive number $\gamma$ can be chosen depending only on
$X$.
\end{corollary}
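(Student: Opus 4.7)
The plan is to reduce Corollary 3.5 to Theorem 3.3 by contraposition. The hypothesis here is formally weaker than the one in Theorem 3.3: it permits the constant $\gamma$ to depend on $Y$, $\eps$, and $f$, whereas universal left-stability demands a single $\gamma$ depending only on $X$. The key observation is that the necessity direction of Theorem 3.3 produces, whenever $X$ fails to be cardinality injective, a specific standard $\eps$-isometry admitting \emph{no} bounded linear left-inverse at all, not merely none with a uniform constant. Consequently, the pointwise hypothesis of Corollary 3.5 already forces $X$ to be cardinality injective, and then the sufficiency half of Theorem 3.3 supplies the uniform constant.

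In detail, I would argue as follows. Assume $X$ satisfies the stated pointwise hypothesis but, for contradiction, is not cardinality injective. By the definition used in the necessity proof of Theorem 3.3, there exists a Banach space $Y$ isometrically containing $X$ and a dense subspace $Z \subset Y$ with $\mathrm{card}(Z) = \mathrm{card}(X)$ such that $X$ is not complemented in $Y$. Applying Lemma 2.1 to the inclusion $X \subset Y$, for any fixed $\eps > 0$ we obtain a standard $\eps$-isometry $f:X \to Y$ with $L(f) = Y$, together with the implication that the existence of a bounded linear $T : L(f) \to X$ satisfying $\|Tf(x)-x\| \leq \gamma\eps$ for all $x \in X$ (with any finite $\gamma$) would force $X$ to be complemented in $Y$. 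Since $X$ is not complemented in $Y$, no such pair $(T,\gamma)$ can exist, which directly contradicts the hypothesis of Corollary 3.5 applied to this particular $f$.

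Therefore $X$ is cardinality injective, and the sufficiency part of Theorem 3.3 immediately produces a single constant $\gamma>0$ depending only on $X$ (in fact of the form $4\alpha$, where $\alpha$ is the cardinality injectivity constant) such that every standard $\eps$-isometry into every Banach space $Y$ admits a bounded linear $T : L(f) \to X$ with $\|Tf(x)-x\| \leq \gamma\eps$. The only subtle point is verifying that Lemma 2.1(2) rules out the existence of \emph{any} stabilizing $T$, not merely one with a prescribed constant; this is exactly how that lemma is phrased, since the construction produces an $f$ whose stability is equivalent to the complementability of $X$ in $Y$. Once this is noted, the proof reduces to invoking Theorem 3.3 in both directions and requires no further computation.
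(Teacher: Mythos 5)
Your proposal is correct and takes essentially the same route as the paper: the paper's own (one-line) proof likewise reduces Corollary 3.5 via Theorem 3.3 to showing that $X$ is cardinality injective, and establishes that by the identical contraposition through Lemma 2.1 used in the necessity half of Theorem 3.3, which you have merely spelled out in detail. You even inherit the paper's own implicit gloss that failure of cardinality injectivity produces a single superspace in which $X$ is not complemented at all (rather than only a family of superspaces witnessing unbounded projection constants, which is the literal negation of the uniform-$\lambda$ definition), so your argument is faithful to the paper's in both substance and in that elided point.
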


\begin{proof}
By the Theorem 3.3, it suffices to show that $X$ is a $\lambda$
-cardinally injective. But it is the same proof of the necessity of
the theorem above.
\end{proof}

\begin{remark}
Cheng, Dai, Dong and Zhou \cite{cheng1} recently showed that every injective Banach space is universally left-stable. By this result and Corollary 2.8, we get that a dual Banach space is injective if and only if it is universally left-stable. This and Theorem 3.3 entail that a dual Banach space is cardinality injective if and only if it is injective. But we do not know whether it is true in general.

\end{remark}

\bibliographystyle{amsalpha}

\end{document}